\newcommand{\OK}{\texttt{OK}\xspace}
\newcommand{\MAX}{\texttt{MAX}\xspace}
\newcommand{\OTHER}{\texttt{OTHER}\xspace}
\newcommand{\defeq}{\stackrel{\rm def}{=}}
\DeclareMathOperator*{\argmin}{arg\,min}
\newtheorem{theorem}{Theorem}[section]
\newtheorem{lemma}[theorem]{Lemma}
\newcommand{\RR}{{\mathbb R}}
\newcommand{\AAA}{{\mathcal A}}
\newcommand{\BBB}{{\mathcal B}}
\newcommand{\FFF}{{\mathcal F}}
\newcommand{\CCC}{{\mathcal C}}
\newcommand{\bAAA}{{\bar{\mathcal A}}}
\newcommand{\bFFF}{{\bar{\mathcal F}}}
\newcommand{\NNN}{{\mathcal N}}
\newcommand{\algoname}{NQN\xspace}
\newcommand{\Footnote}[1]{\footnote{#1}}
\newcommand{\nitish}[1]{{\color{blue} #1}}
\begin{document}
\title{A Limited-Memory Quasi-Newton Algorithm for Bound-Constrained Nonsmooth Optimization}

\author{Nitish Shirish Keskar\thanks{Email: keskar.nitish@u.northwestern.edu} \qquad  Andreas W{\"a}chter\thanks{Email: andreas.waechter@northwestern.edu} \\
Department of Industrial Engineering and Management Sciences,\\ Northwestern University, Evanston, Illinois, USA -- 60208}
           
\maketitle

\begin{abstract}
We consider the problem of minimizing a continuous function that may be nonsmooth and nonconvex, subject to bound constraints. We propose an algorithm that uses the L-BFGS quasi-Newton approximation of the problem's curvature together with a variant of the weak Wolfe line search.
The key ingredient of the method is an active-set selection strategy that defines the subspace in which search directions are computed.
To overcome the inherent shortsightedness of the gradient for a nonsmooth function, we propose two strategies.  The first relies on an approximation of the $\epsilon$-minimum norm subgradient, and the second uses an iterative corrective loop that augments the active set based on the resulting search directions.
We describe a Python implementation of the proposed algorithm and present numerical results on a set of standard test problems to illustrate the efficacy of our approach. 
\end{abstract}

\begin{keywords}
nonsmooth optimization; bound constraints; quasi-Newton; L-BFGS; active-set method; active-set correction
\end{keywords}

\section{Introduction}
\label{sec:introduction}

We propose an algorithm for solving bound-constrained optimization problems of the form
\begin{align}
\label{eq:problem}
   \min_{x\in\RR^n} \ &  f(x) \\
    \text{s.t.} \ & l \leq  x \leq u, \nonumber
\end{align}
where the objective function $f:\mathbb{R}^n\longrightarrow \mathbb{R}$ is continuous but may not be differentiable everywhere.
%
No assumptions are placed on the convexity of $f$. The lower bounds $l\in(\RR\cup\{-\infty\})^n$ and upper bounds $u\in(\RR\cup\{\infty\})^n$ can take values of $-\infty$ or $\infty$ whenever the variables are unbounded in those coordinates.  We assume that the problem is feasible; i.e., $l\leq u$.

Many algorithms have been proposed for solving \eqref{eq:problem} when $x$ is unconstrained. Some of these methods include gradient-sampling methods \cite{burke2005robust,curtis2013adaptive,kiwiel2007convergence}, bundle methods \cite{lmbmnew,haarala2007globally,surveybundle}, quasi-Newton methods \cite{curtis2015quasi, kaku,Lewis2013,Lewis2015,skajaa2010limited}, and hybrid methods \cite{curtis2015quasi}. Gradient-sampling methods randomly sample gradients in the vicinity of the iterate to calculate an estimate of the minimum-norm subgradient. In conjunction with an Armijo-like line search, global convergence can be proved using these minimum-norm subgradients as search directions. Bundle methods aggregate subgradients from previous iterates and iteratively solve piecewise-quadratic approximations of the objective function to generate steps.  
Recently, Lewis and Overton \cite{Lewis2013} observed that the unadulterated BFGS method works very well when applied to unconstrained nonsmooth optimization problems so long as the weak Wolfe line search is performed. Skajaa \cite{skajaa2010limited} reported similar results for L-BFGS \cite{liu1989limited}. For problems ranging from $n=100$ to $n=10000$, it was found that L-BFGS was not only more efficient in solving test problems, but it was also more reliable compared to other methods. However, theoretical convergence guarantees (or the lack thereof) of (L-)BFGS for nonsmooth problems remain to be shown. Recent efforts (e.g., \cite{curtis2015quasi}) focused on the design of a hybrid strategy that retains the efficacy of standard L-BFGS but ensures convergence through a gradient-sampling approach. Other approaches for solving \eqref{eq:problem} include subgradient methods, quasi-secant methods, and discrete gradient methods. We refer the reader to \cite{bagirov2014introduction,clarke1990optimization,nsocomparison,surveybundle,skajaa2010limited} for a detailed summary of these methods and their numerical performance.

In certain applications, it is necessary to optimize a nonsmooth objective function subject to bound constraints. These include applications in many fields including statistics, optimal control, and as subproblems for certain robust optimization problems \cite{bagirov2014introduction}. Some of the algorithms described above can be extended to solve problems with bound constraints. For instance, the LMBM-B \cite{karmitsa2010adaptive,karmitsa2010limited} method extends the limited-memory bundle method to \eqref{eq:problem}. Gradient-sampling methods have also been extended to the case of constrained optimization \cite{curtis2012sequential}.
A natural question is whether the surprising and remarkable success of the unadulterated (L-)BFGS method in the unconstrained case can be extended to problems with bound constraints.

The L-BFGS-B method is a variant of L-BFGS for minimizing a smooth objective function over box constraints.  At an iterate $x^k$, the method first determines an active set by computing a Cauchy point $\tilde x^k$ as the first local minimizer $\alpha>0$ of $f$ along the gradient-projection path $\alpha\mapsto P(x^k-\alpha\nabla f(x^k))$.  Here, $P(v)$ is the orthogonal projection of a vector $v\in\RR^n$ onto the feasible hypercube $[l,u]$.  The bound constraints that are tight at the Cauchy point $\tilde x^k$  then define an active set, $\AAA^k=\{i : \tilde x^k_i \in \{l_i,u_i\}\}$, and a subspace step $\tilde p^k$ is computed as the solution of the problem
\begin{align}\label{eq:subprob}
\min_{p\in\RR^n} \ & f(x^k) + \nabla f(x^k)^Tp + \frac12 p^TB^kp \\
 \text{s.t.} \ & p_i = 0\text{ for all } i\in\AAA^k.\nonumber
\end{align}
The objective in this subproblem is a quadratic model of the original objective function.  Its Hessian matrix $B^k$ is defined by the L-BFGS update and therefore is positive definite.  
Subproblem \eqref{eq:subprob} is solved efficiently using the compact-form representation of L-BFGS \cite{byrd1994representations}.
 Finally, the overall step $p^k = (\tilde x^k+\tilde p^k) -x^k$ is computed and a projected line search is performed along the path $\alpha \mapsto P( x^k + \alpha p^k)$ to find a step size satisfying the strong Wolfe conditions.

Henao et al.~\cite{lbfgsbns} recently proposed L-BFGS-B-NS as a variant of L-BFGS-B for solving \eqref{eq:problem} with a nonsmooth objective function.  The only difference to the original method is that the strong Wolfe line search is replaced with the weak Wolfe line search.  This is the same modification that was suggested by Lewis and Overton \cite{Lewis2013} in the unconstrained case.


In this paper, we propose a different adaptation of the L-BFGS method.  Our method first determines an active set based on the bound constraints that are tight at the current iterate, without referring to a Cauchy point.  After computing the search direction from \eqref{eq:subprob}, a new iterate is determined using a variant of the weak Wolfe line search.

The key ingredients in our method are active-set selection strategies that take into account the non\-smoothness of the function.  
First, we propose the use of an approximation of the minimum-norm $\epsilon$-subgradient instead of the gradient to determine which bound constraints are binding at the current iterate.  Second, we explore an iterative corrective mechanism that augments the active set until the final search direction  points inside the feasible region. 
%


 Throughout the paper, we assume that the function is differentiable at each iterate and trial point, and that its gradient can be computed.  This is in line with the work by Lewis and Overton \cite{Lewis2013} who make the same assumption for their numerical experiments.
%
Burke et al.\ \cite{burke2005robust} describe a mechanism that perturbs a trial point in case $f$ is not differentiable at that point.
In the box-constrained case, the boundary of the feasible region might align with a manifold of nondifferentiability.  Then, the projections carried out during the line search might generate trial points that lie in this manifold.  To circumvent this problem, we assume that there is an extension of the function beyond the feasible region that is differentiable at almost all boundary points.  For example, consider the feasible set $[0,1]\subset\RR$ with objective function $f(x)=|x|$ which is not differentiable at the boundary point $x=0$.  When we replace the objective with $\tilde f(x)=x$, the objective values are identical within the feasible region, resulting in the same optimal solution, but the function is now differentiable at $x=0$. 

The paper is organized as follows. In the subsection to follow, we introduce some notation that is used throughout the paper. In Section 2, we describe our algorithm including the active-set prediction and correction strategies, as well as the proposed weak Wolfe line search. Finally, in Section 3, we present details of our Python implementation and detailed numerical results examining the efficacy of our approach. 

\subsection{Notation}
We use superscripts to denote the iteration index and subscripts to denote a specific element of a vector. For instance, $x^k_j$ refers to the $j^{th}$ element of the $k^{th}$ iterate.  We abbreviate $[\nabla f(x)]_i$ by $\nabla_i f(x)$. 
We define the instantaneous projection of a direction $p$ at an iterate $x$ as
\begin{equation}\label{eq:def_T}
[T(x,p)]_i = 
\begin{cases}
p_i & \text{ if } x_i \in (l_i,u_i) \\
\max(p_i,0) & \text{ if } x_i = l_i\\
\min(p_i,0) & \text{ if } x_i = u_i. 
\end{cases}
\end{equation}
This operator zeroes out those components of $p$ for which $x$ is at its bounds with $p$ pointing in the direction of infeasibility.
%
%
We use the notation $B_\epsilon(x)$ to denote the closed ball of radius $\epsilon$ centered at $x$. Further, we denote the cardinality of a set $A$ by $|A|$.
Finally, given a vector $v$ and a set of indices $\mathcal A$,  $v_{\mathcal A}$ refers to the subvector corresponding to the indices in $\AAA$.  Similarly, given a matrix $M$, then $M_{\AAA,\mathcal B}$ denotes the submatrix with row indices given in $\AAA$ and column indices given in $\mathcal B$.
Finally, we let $\NNN=\{1,\ldots,n\}$ be the set of all variable indices.

\section{Proposed Algorithm}

\subsection{Active-Set Framework}
\label{sec:activeset}

The proposed algorithm is an active-set method which, at each iteration, determines an estimate $\AAA^k$ of the optimal active set $\AAA^*:=\{i\in\NNN : x^*_i=l_i \text{ or } x_i^*=u_i\}$ of a local solution $x^*$ of \eqref{eq:problem}.  
We say that the bound constraints in $\AAA^*$ are tight at $x^*$.
%
The L-BFGS-B algorithm chooses as active set $\AAA^k$ the bounds at which the Cauchy point is tight.  
In contrast, our method chooses from bounds for which the current iterate $x^k$ itself is tight, without referring to a Cauchy point.


For a smooth objective function, we might consider the set of the tight constraints that are binding; i.e.,
\begin{equation}
\label{eq:activeset}
    \mathcal{B}^k(g^k) = \{i\in\NNN:x^k_i=l_i \text{ and } g^k_i\geq0\} \cup \{i\in\NNN:x^k_i=u_i \text{ and } g^k_i\leq0\}
\end{equation}
with $g^k = \nabla f(x^k)$.  These are the coordinates for which the gradient predicts no decrease in the objective if the corresponding components of the iterate are moved inside the feasible region.  With this, $\nabla_i f(x^k)=0$ for all $i\in\NNN\setminus\BBB(\nabla f(x^k))$ if and only if $x^k$ satisfies the first-order optimality conditions for problem \eqref{eq:problem} at $x^k$; i.e., 
\begin{align}
\nabla_i f(x^k)=0 & \text{ for all $i$ with } l_i < x^k_i < u_i \nonumber\\
\nabla_i f(x^k)\geq 0 & \text{ for all $i$ with } x^k_i = l_i \label{eq:firstorder}\\
\nabla_i f(x^k)\leq 0 & \text{ for all $i$ with } x^k_i = u_i.\nonumber
\end{align}
Consequently, the subspace step $p^k$ obtained from solving \eqref{eq:subprob} with $\AAA^k=\BBB(\nabla f(x^k))$ is zero if and only if $x^k$ is a first-order optimal point.  
In addition, it can be shown that $p^k$ is a descent direction for the projected line search; i.e., the function $\alpha\mapsto f(P(x^k+\alpha p^k))$ is decreasing for $\alpha>0$ sufficiently small.  (This is a consequence of \cite[Proposition 1]{bertsekas1982projected}.)

Consider a simple algorithm that computes search directions from \eqref{eq:subprob} with $\AAA^k=\BBB(\nabla f(x^k))$ and performs a projected line search to determine the new iterate $x^{k+1}=P(x^k+\alpha^kp^k)$ with some step size $\alpha^k>0$.  Suppose that $f$ is differentiable and that the iterates converge to a non-degenerate first-order optimal $x^*$; i.e., $x^*$ satisfies \eqref{eq:firstorder} and $\nabla_i f(x^*)\neq 0$ for all $i\in\AAA^*$.  Further assume that at some iterate $x^k$ sufficiently close to $x^*$, the bounds that are tight at $x^k$ are identical to the optimal active set; i.e., $\{i\in\NNN : x_i^k=l_i \text{ or }x_i^k=u_i  \}=\AAA^*$. It is then not difficult to show that $\AAA^k=\BBB(\nabla f(x^*))$ for all large $k$.  In other words, the optimal active set is identified in a finite number of iterations.  This observation motivates the choice $\AAA^k=\BBB(\nabla f(x^k))$.

The conclusion in the previous paragraph was drawn under the strong assumption that an iterate is encountered at which all constraints in $\AAA^*$ are tight.  To the best of our knowledge, no convergence proof has been established for the simple algorithm when this assumption is lifted, even when $f$ is differentiable.  
%
Nevertheless, despite the lack of theoretical convergence guarantees, our proposed active-set selection strategy is based on \eqref{eq:activeset} since it seems to perform well in practice in our setting. Recall that global convergence has not been proved for the unadulterated L-BFGS algorithm with a nonsmooth objective function even in the unconstrained case.

In the context of nonsmooth optimization, the gradient of the objective function can be very myopic in regions close to a manifold on which the function is nondifferentiable, and a gradient-based active-set identification can be quite misleading. We illustrate this with a simple example, depicted in Figure~\ref{fig:myopism}.
\begin{figure}[t]
    \centering
    \includegraphics[width=0.5\textwidth]{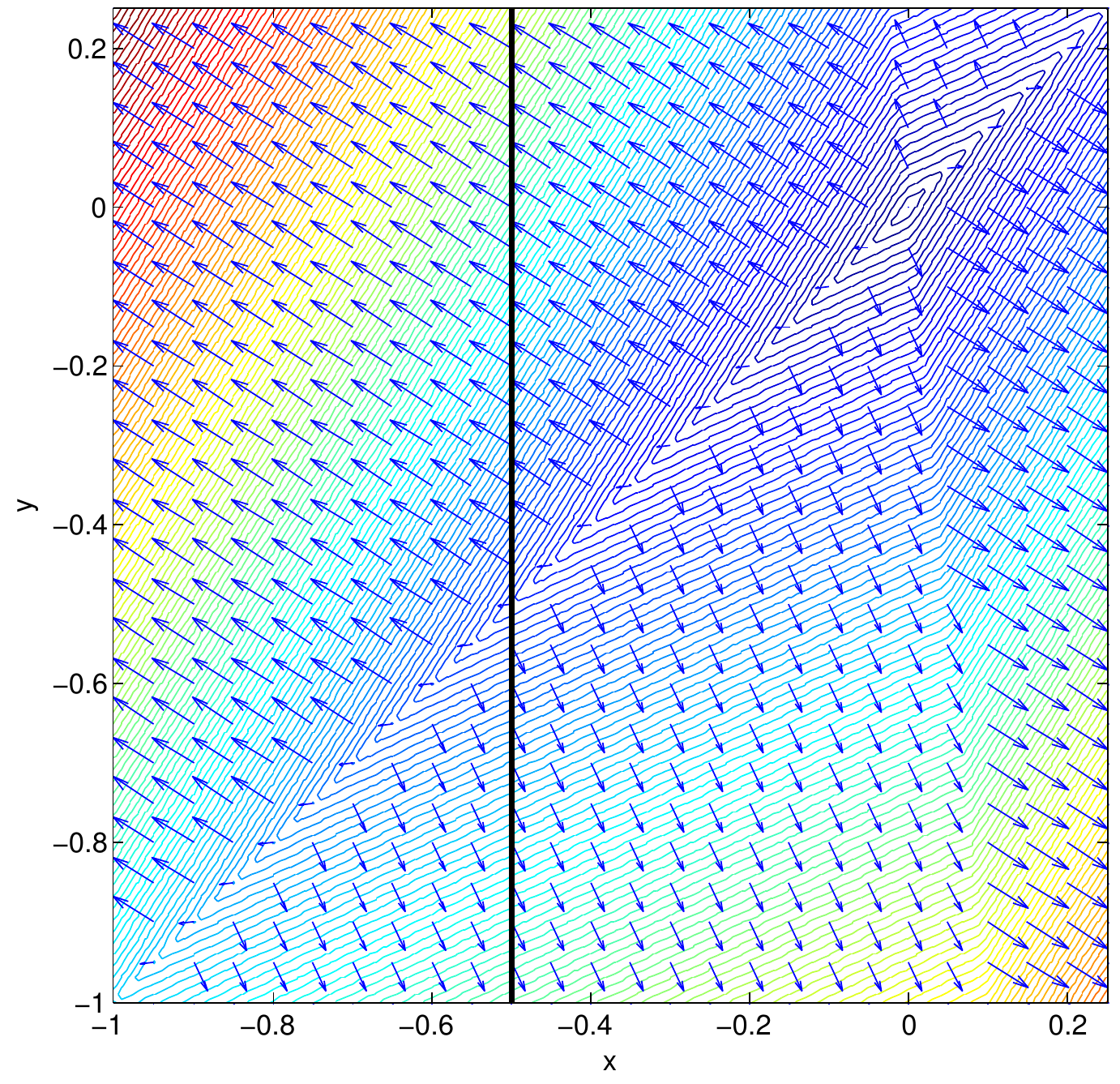}
    \caption{The contour lines of the objective function in \eqref{eq:myopic}.  The arrows indicate the gradients of the function.}
    \label{fig:myopism}
\end{figure}

Consider the problem
\begin{align}
    \min_{x\in \mathbb{R}^2} \  &  |x_1 - x_2| + \frac{1}{2}(x_1 + 0.1 x_2)^2\label{eq:myopic}
\\
    \text{s.t.} \ & x_1 \leq -0.5\nonumber
\end{align}
with optimal solution $x^*=(-0.5,-0.5)^T$. Suppose we have an iterate $x^k=(-0.5,a)^T$ for a number $a\in (-5,-0.5)$. The function is differentiable at this point with gradient $\nabla f(x^k) = (1,-1)^T+(-0.5+0.1a) (1, 0.1)^T=(0.5+0.1a, -1.05+0.01a)^T$. Given that $a>-5$, we have $\nabla_1 f(x^k) >0$. Therefore, the choice $\AAA_k=\BBB(\nabla f(x^k))$  predicts $x_1$ to be free, no matter how close $x^k$ is to $x^*$. This determination is incorrect since $x_1$ is indeed at its bound at the solution.
Notice that the failure of identification is caused by the inherent shortsightedness of the gradient and not due to some kind of degeneracy. 

We point out that also the active-set identification of the L-BFGS-B method does not recognize that $x_1$ is tight at the solution. The L-BFGS-B method, as described previously, locates the first minimizer of the gradient-projection path, $\alpha\mapsto P(x^k - \alpha \nabla f(x^k))$, for the active-set identification. For problem~\eqref{eq:myopic}, the ray $\{x^k - \alpha \nabla f(x^k) : \alpha > 0\}$, for an iterate $x^k=(-0.5,a)^T$ with $a\in(-5,-0.5)$, never intersects a bound. Thus, also in this case, $x_1$ is not recognized as active. 

In order to compensate for the shortsightedness of the gradient, we propose two strategies: (i) an active-set prediction that considers an approximation $\tilde g^k$ of minimum-norm subgradients of nearby non-differentiable points to determine the binding constraints $\BBB(\tilde g^k$) (Section \ref{sec:altactiveset}); and (ii) a correction mechanism that augments the active set if the search direction, computed with L-BFGS approximation of the nonsmooth objective, indicates that a variable should be active (Section \ref{sec:correction}). 




\subsection{Active-Set Prediction Using a Subgradient Approximation}
\label{sec:altactiveset}
Continuing with problem \eqref{eq:myopic}, let us consider the scenario in which we use the $\epsilon$-minimum-norm subgradient ($\epsilon$-MNSG) based on the Clarke $\epsilon$-subdifferential \cite{burke2005robust} for the active-set prediction. For a fixed $\epsilon>0$, the $\epsilon$-MNSG is defined as
\begin{align}
\hat{g}_\epsilon(x) &= \argmin_{y \in \text{cl} \text{ conv } \nabla f({B}_\epsilon(x))} \quad \frac{1}{2} \| y \|_2^2.
\end{align}
Here, $\nabla f({B}_\epsilon(x)) = \{\nabla f(y): y \in B_\epsilon(x)\}$, and the term ``cl conv'' indicates the closure of the convex hull of a set. 
%
When $\epsilon$ is sufficiently small and $a$ is close to $-0.5$, the $\epsilon$-MNSG at $x^k=(-0.5,a)^T$ is approximately $\hat{g}_\epsilon(x^k) \approx (-0.3025,-0.3025)^T$.
The active set $\AAA^k=\mathcal{B}(\hat{g}_\epsilon(x^k))$  correctly identifies that $x_1$ is tight at the solution. Indeed, the $\epsilon$-MNSG attempts to be less myopic than the gradient and forms the basis for gradient-sampling methods \cite{burke2005robust}.
%
This motivates us to base the active-set identification on the $\epsilon$-MNSG instead of the gradient.

Computing the true $\epsilon$-MNSG is usually not feasible, due to the complex nature of $\nabla f(B_\epsilon(x^k))$.  Instead, gradient sampling methods work with an approximation $\tilde g^k$ that is based on the gradients at points from a finite random subsample $G^k=\{x^{k,1},\ldots,x^{k,l^k}\}$ of the ball $B_\epsilon(x^k)$.  More specifically, $\tilde{g}^k= \sum_{i=1}^{l^k} \nabla f(x^{k,i})\lambda^\star_i$ where $\lambda^\star$ is the solution of the convex quadratic problem 
\begin{align}
    \min_{\lambda\in\RR^{l^k}} \; &\frac{1}{2} \left\| \sum_{i=1}^{l^k} \nabla f(x^{k,i}) \lambda_i \right \|_2^2 \nonumber\\
    \text{s.t.} \ & \sum_{i=1}^{l^k} \lambda_i = 1 \label{eqn:msng}\\
    \ &  0\leq \lambda_i \leq 1 \quad \text{ for all } i=1,\ldots,l^k.\nonumber
\end{align}

A good approximation of the $\epsilon$-MNSG typically requires a large number $l^k$ of gradient evaluations. For the purpose of determining the active set, however, an inexact estimate might suffice, since its main purpose is to capture roughly the geometry of the nonsmooth function.  It is not used for the step computation itself.
To avoid additional gradient evaluations, we simply choose $G^k=\{x^k, \ldots, x^{\max\{0,k-M\}}\}$ to contain the most recent $M$ iterates.  This strategy is motivated by two observations: (i) as we will describe in Section \ref{sec:linesearch}, the line search encourages the iterates to cross over manifolds of nondifferentiability and thus, the gradients for $G^k$  represent different ``pieces'' of the nonsmooth function; and (ii) near the solution, where active-set prediction strategies are arguably more important, the steps taken by the algorithm are small and the points in $G^k$ are then from a small neighborhood around the current iterate.



Motivated by these observations, our first active-set selection strategy chooses
\begin{equation}\label{eq:activesetprediction}
\mathcal A^k = \mathcal B^k(\tilde g^k) \cup \mathcal B^k(\nabla f(x^k)).
\end{equation}
Note that we include the bound constraints identified by the gradient $\nabla f(x^k)$ as well.  We observed in our experiments that this led to better performance.  We speculate that, in regions not close to a manifold on which the function is nonsmooth, the active set identified by the gradient is often reliable and the subgradient approximation might cause spurious identification, when the points in $G^k$ are not close to each other. 



\subsection{Computation of the Search Direction}
\label{sec:stepcomputation}

Our second active-set strategy loops over candidate choices for the active set that are evaluated based on the search directions they generate.  We describe the step computation first.

The search directions are based on the BFGS method \cite{mybook}. This method constructs and updates a  convex second-order model of the objective function requiring only the first-order derivatives. Given an estimate, $B^l$, of the curvature of the objective function, the BFGS method revises the estimate using a rank-2 update as
\begin{equation}\label{eq:bfgs}
B^{l+1} = B^l + \frac{{y}^l ({y}^l)^{T}}{({y}^l)^{T} {s}^l} - \frac{B^l {s}^l ({s}^l)^{T} B^l }{({s}^l)^{T} B^l{s}^l},
\end{equation}
where
\begin{subequations}\label{eq:s_and_y}
\begin{align}
s^l &= x^{l+1} - x^l \\ y^l &= \nabla f(x^{l+1}) - \nabla f(x^l).
\end{align}
\end{subequations}
One usually requires that the curvature condition
\begin{equation}
  \label{eq:sycurv}
  (s^l)^Ty^l > 0
\end{equation}
holds, since then $B^{l}$ remains positive definite if the initial matrix $B^0$ is positive definite.
For smooth convex objective functions, the BFGS method possesses strong theoretical properties including global convergence and superlinear local convergence. Even though only limited theoretical convergence guarantees have been established for nonconvex  objectives, many have noted good performance on a variety of problems.

The original BFGS method requires the storage and manipulation of an $n\times n$ matrix. For large-scale problems, this is unwieldy. The limited-memory BFGS (L-BFGS) method \cite{liu1989limited} attempts to alleviate this handicap by storing only the past $m$ curvature pairs $(s^l,y^l)$. The matrix $B^l$ itself is never explicitly constructed.  The value for $m$, also called as the L-BFGS memory, is often in the range of $5$--$20$.  This reduces the storage from $\mathcal{O}(n^2)$ to $\mathcal{O}(mn)$. The complexity of the search direction computation in the L-BFGS method is also reduced from $\mathcal{O}(n^2)$ to $\mathcal{O}(mn)$. 

Given an active set $\mathcal{A}^k$ and an L-BFGS approximation $B^k$ to the curvature of the problem, we generate the search direction $p^k$ as the solution to subproblem \eqref{eq:subprob}.
Here, all components of $p^k$ belonging to $\AAA^k$ are set to zero, and the remaining entries are obtained from a linear system involving a symmetric submatrix of $B^k$.  
Making use of the L-BFGS compact representation matrices \cite{byrd1995limited, byrd1994representations}, the step can be compute efficiently, using $2m^2t+6mt+4t+\mathcal{O}(m^3)$ operations where $t=|\mathcal{A}^k|$. 




To specify the L-BFGS approximation in a given iteration $k$, it is necessary to provide an initial matrix $B^{k,0}$, from which $B^k=B^{k,m}$ is generated by repeatedly applying the update formula \eqref{eq:bfgs}.  The matrix $B^{k,0}$ is an estimate of the curvature of $f$. 
This choice, especially when the L-BFGS memory $m$ is low, has direct consequences on the quality of the search direction. For smooth optimization, $\theta=\frac{(s^k)^Ty^k}{(y^k)^Ty^k}$ is often recommended and is found to work well in a variety of applications. Intuitively, the ratio is justified since it is a scalar approximation to $\nabla^2 f(x^k)$ \cite{mybook}. However, for nonsmooth optimization this choice seems to lead to inferior performance. Instead, Curtis and Que \cite{curtis2015quasi} proposed $\theta=\max(1.0,\min(\|\nabla f(x^k)\|_\infty,10^8))$. We use this choice in our implementation as well.



\subsection{An Active-Set Correction Mechanism}
\label{sec:correction}
In Section \ref{sec:altactiveset}, we described an active-set identification mechanism that is based on an approximation of a subgradient. This strategy attempts to {guess} directly which bounds are tight at the optimal solution.

Next we describe another approach using an iterative correction procedure that judges the quality of a candidate active set and adjusts it if necessary. The quality of the active set is adjudged through the search direction generated using it. 
The goal is to obtain a direction that is feasible in the sense that a sufficiently small step into this direction does not leave the feasible region.
%
If, for a given candidate active set, there is a variable that is tight at the current iterate and the candidate search direction points outside the feasible region, then this variable is added to the active set and the procedure is repeated.
Similar mechanisms have been used previously, for example, for solving convex quadratic programs \cite{CurtisHanRobinson2015,hungerlander2015feasible} and $\ell_1$-regularized convex optimization problems \cite{Byrd2016,obapaper}.

For ease of notation, we drop the iteration index $k$ for the remainder of this section. Given an iterate $x$, we let $g=\nabla f(x)$, and we define the set of interior variables, $\FFF=\{i: l_i < x_i < u_i \}$ and the set of variables with tight bound constraints, $\bFFF=\NNN\setminus\FFF$.
Algorithm~\ref{alg:correction} formally states the proposed active-set correction procedure.
When there are no tight bounds, the active set must be empty, and step~\ref{alg:nothing} immediately returns the unrestricted search direction in the full space.
Otherwise, the $t$-loop computes a potential search direction in step~\ref{alg:subprob} and tests if there are any components, collected in the set $\CCC^t$, that would take a variable instantaneously outside its bound constraints.
Such components are added to the active set, until a feasible direction is found. 
Clearly, the loop terminates in finite time, since $\AAA^{t}$ grows by at least one element per iteration.


\begin{algorithm}[t]
\caption{ActiveSetCorrection}
\label{alg:correction}
{\bf Inputs:} Current iterate $x$; initial active set $\AAA^{\textup{init}}\subseteq\bFFF$.

{\bf Output:} Final active set $\AAA$ with corresponding search direction $p$.
\begin{algorithmic}[1]
\State\label{alg:initcor} Initialize $t\gets 0$ and set $\AAA^0 = \AAA^{\textup{init}}$.
\If{$\bFFF=\emptyset$} \Comment{Nothing to do if there are no tight bounds}
\State Compute $p$ as solution of \eqref{eq:subprob} with $\AAA^k=\emptyset$.
\State \Return{$\AAA=\emptyset$ and $p$.}\label{alg:nothing}
\EndIf
\For {$t=0,1,2,\ldots$}
\State\label{alg:stepcor} Compute $p^t$ as solution of \eqref{eq:subprob} with $\AAA^k=\AAA^t$. \Comment{Potential search direction}\label{alg:subprob}
\State Set $\CCC^{t}=\{i\in\bFFF\setminus\AAA^t : T(x,p^t)_i \neq p^t_i\}$. \Comment Variables to be added\label{alg:defC}
\If{$\CCC^{t}=\emptyset$} 
\State \Return{$\AAA=\AAA^t$ and $p=p^t$}. \Comment{No more corrections necessary}
\EndIf
\State Set $\mathcal{A}^{t+1} = \mathcal{A}^t \cup \mathcal{C}^{t}$.
\EndFor
\end{algorithmic}
\end{algorithm}

The approach described in Section~\ref{sec:altactiveset} uses a subgradient approximation to overcome the shortsightedness of the gradient when predicting the optimal active set.  In contrast, the corrective procedure in Algorithm~\ref{alg:correction} exploits the fact that the L-BFGS approximation of the problem curvature contains information about the structure of the nonsmoothness.  
It has been observed that the (L-)BFGS approximation of a nonsmooth objective function is able to approximate the U- and V-spaces of the objective function \cite{Lewis2013, skajaa2010limited}.  Roughly speaking, the U-space of $f$ at a point $x$ is the subspace tangent to the manifold of points at which $f$ is not differentiable.  The V-space is the orthogonal complement of the U-space.   In \cite{Lewis2013}, Lewis and Overton hypothesized that the V-space of a nonsmooth function can be numerically approximated within the unadulterated BFGS method through the eigenvectors of $B^k$ corresponding to eigenvalues that converge to infinity.   

In our example problem \eqref{eq:myopic}, the V-space at any point $x$ with $x_1=x_2$, including the optimal solution, is spanned by $(1,-1)^T$.
When we apply the proposed method from random starting points, the iterates converge to the solution $(-0.5, -0.5)^T$. After some iterations, the BFGS matrix is approximately
\begin{equation}\label{eq:Bk_approx}
B^k \approx y^k \cdot \begin{bmatrix} 1 & -1 \\ -1 & 1\end{bmatrix}
\end{equation}
with some sequence $y^k$ converging to infinity. Note that the eigenvectors of $B^k$ suggest precise recovery of the U- and V-spaces of the objective function. In particular, the eigenvector $(1,-1)^T$ with respect to the asymptotically infinite eigenvalue indeed spans the V-space of $f$ at the optimal solution.
Even though the matrix on the right-hand side of \eqref{eq:Bk_approx} is singular, $B^k$ itself is always nonsingular.  Numerically we observe that the inverse matrix $H^k=(B^k)^{-1}$ is approximately
\begin{equation}\label{eq:Hk_approx}
H^k \approx \tilde y^k\cdot \begin{bmatrix} 1 & 1 \\ 1 & 1\end{bmatrix}
\end{equation}
with some sequence $\tilde y^k$ converging to zero, and now $(1,-1)^T$ is an eigenvector with respect to the eigenvalue approaching zero.

Dropping the iteration index $k$, consider again an iterate of the form $x=(-0.5,a)^T$ with $a\in (-5,-0.5)$ and gradient $\nabla f(x) = (0.5+0.1a, -1.05+0.01a)^T$.  In Section~\ref{sec:activeset} we observed that the na{\"i}ve choice $\AAA=\mathcal B(\nabla f(x))=\emptyset$ fails to recognize that $x_1$ is active at the solution.  If we choose $\AAA^{\textup{init}}=\mathcal B(\nabla f(x))$ in Algorithm~\ref{alg:correction}, we have $\AAA^0=\emptyset$ in the first iteration.  With the approximation \eqref{eq:Hk_approx}, the search direction in step \ref{alg:stepcor} becomes $p^0\approx\tilde y\cdot (0.55-0.11a, 0.55-0.11a)^T$.  Clearly, from the current iterate with $x_1=-0.5$, this direction points out of the feasible region because $p^0_1>1$.  Therefore, $T(x, p^0)_1 \neq p^0_1$ and $\mathcal C^0=\{1\}$.  In the next iteration of the correction loop, $\AAA^1=\{1\}$ is accepted as the final active set, and correctly predicts that $x_1$ is active at the solution.

The following lemma shows that the correction mechanism cannot lead to a spurious termination of the overall algorithm.  A zero step can be generated only when the current iterate is already a stationary point of the objective function (assuming that $\AAA^0$ is initialized as $\BBB(\nabla f(x^k))$).

\begin{lemma}\label{lem:p0}
Suppose $\AAA^0=\BBB(\nabla f(x^k))$ and the corrective loop terminates with $p=0$. Then, the current iterate $x^k$ satisfies the first order optimality conditions \eqref{eq:firstorder} for problem \eqref{eq:problem}.
\end{lemma}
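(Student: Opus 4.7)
The plan is to translate the hypothesis $p=0$ into the optimality conditions~\eqref{eq:firstorder} for $x^k$ via the KKT conditions of the strictly convex subproblem~\eqref{eq:subprob} and the structure of the correction loop. Write $g=\nabla f(x^k)$, let $\AAA=\AAA^t$ and $F=\NNN\setminus\AAA$ denote the final active and free sets, and set $M^s=\bFFF\setminus\AAA^s$ at each iteration $s$ of the loop.

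Since $B^k$ is positive definite and $p=0$ is the unique minimizer of~\eqref{eq:subprob} with active set $\AAA$, the gradient of the quadratic objective must vanish on $F$, giving $g_i=0$ for every $i\in F$. Because Algorithm~\ref{alg:correction} keeps $\AAA^s\subseteq\bFFF$ throughout the loop, $\FFF\subseteq F$ and hence $g_\FFF=0$, which already establishes the first line of~\eqref{eq:firstorder}. Next, any $i\in F\cap\bFFF$ would lie outside $\AAA^0=\BBB(g)$ (since $\AAA^0\subseteq\AAA$), forcing $g_i$ to carry a strictly wrong sign by the definition of $\BBB$ and contradicting $g_i=0$. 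So $F\cap\bFFF=\emptyset$, i.e.\ $\AAA=\bFFF$. The remaining two lines of~\eqref{eq:firstorder} then follow immediately from $\AAA^0=\BBB(g)$ provided $\AAA=\AAA^0$, which is equivalent to $\AAA^0=\bFFF$.

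I expect the proof that $\AAA^0=\bFFF$ to be the main obstacle. My plan is to argue by contradiction: assume $\AAA^0\subsetneq\bFFF$ and prove by induction on $s$ that $\AAA^s\subsetneq\bFFF$ for every $s\in\{0,\ldots,t\}$, which will contradict $\AAA^t=\bFFF$. For the inductive step at $s<t$, the loop does not terminate at $s$, so $\CCC^s\neq\emptyset$, hence $p^s\neq 0$, and thus $g_{F^s}\neq 0$. The KKT relation $p^s_{F^s}=-(B^k_{F^s,F^s})^{-1}g_{F^s}$ together with positive-definiteness of $B^k_{F^s,F^s}$ then yields
\[
\sum_{i\in F^s}(-g_i)\,p^s_i \;=\; g_{F^s}^{\top}(B^k_{F^s,F^s})^{-1}g_{F^s} \;>\;0.
\]
Using $g_\FFF=0$, this sum collapses to $\sum_{i\in M^s}(-g_i)\,p^s_i>0$. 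Every $i\in\CCC^s\subseteq M^s$ satisfies two properties at once: (i) $p^s_i$ points instantaneously outside the feasible region at $x^k$, and (ii) $i\notin\AAA^0$, so $g_i$ carries the strictly wrong sign by the definition of $\BBB$. Together these force $(-g_i)\,p^s_i<0$ for each $i\in\CCC^s$, so the positive contribution to the sum must be supplied by $M^s\setminus\CCC^s$, which is therefore nonempty. Consequently $\AAA^{s+1}=\AAA^s\cup\CCC^s\subsetneq\bFFF$, closing the induction and yielding the desired contradiction.
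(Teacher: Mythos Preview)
Your proof is correct and rests on the same two ingredients as the paper's---the positive-definiteness of $B^k$ forcing a sign on $(-g)^\top p^s$ restricted to the free set, and the observation that on $\CCC^s\subseteq\bFFF\setminus\BBB(g)$ each term $(-g_i)p^s_i$ is strictly negative---but the organization differs. The paper's argument is shorter: after recording that $g$ vanishes on $F^{\hat t}=\FFF\cup\bAAA^{\hat t}$, it assumes $\hat t>0$ and looks only one step back. Because $F^{\hat t-1}=F^{\hat t}\cup\CCC^{\hat t-1}$, the entire inner product $(-g_{F^{\hat t-1}})^\top p^{\hat t-1}_{F^{\hat t-1}}$ collapses to the $\CCC^{\hat t-1}$ block, and the contradiction is immediate. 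This yields $\hat t=0$ in one shot, without an induction. Your route instead first establishes $\AAA^t=\bFFF$ (your step~3) and then runs a forward induction to propagate $\AAA^s\subsetneq\bFFF$; this works, but once you have step~3 you could equally well apply the paper's one-step-back argument at $s=t-1$ and avoid the induction entirely. The trade-off: the paper's version is more economical, while your inductive version makes the intermediate structure $M^s\setminus\CCC^s\neq\emptyset$ explicit at every stage.
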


\begin{proof}[Proof of Lemma~\ref{lem:p0}]

For ease of exposition, we assume that $l=0$ and $u=\infty$ in problem \eqref{eq:problem}. 
Let $\hat t$ be the iteration in which the method terminates with $p=p^{\hat t}=0$ and let $g=\nabla f(x)$.

Given the active set $\AAA^{\hat t}\subseteq\bFFF$ and defining $\bAAA^{\hat t}:=\bFFF\setminus\AAA^{\hat t}$, the solution $p$ to \eqref{eq:subprob} (with $\AAA^k=\AAA^{\hat t}$) is obtained by solving the linear system
\begin{equation}\label{eq:pF}
\begin{bmatrix}
   B_{\FFF\FFF} & B_{\FFF\bAAA^{\hat t}} \\ B_{\bAAA^{\hat t}\FFF} & B_{\bAAA^{\hat t}\bAAA^{\hat t}}
\end{bmatrix}
\begin{pmatrix}p_{\FFF} \\ p_{\bAAA^{\hat t}}\end{pmatrix} = - 
\begin{pmatrix}
  g_{\FFF} \\ g_{\bAAA^{\hat t}}
\end{pmatrix}
\end{equation}
and setting
\begin{equation}\label{eq:pA}
p_{\AAA^{\hat t}}=0.
\end{equation}
%
 Because the L-BFGS approximation $B$ is positive definite, \eqref{eq:pF} together with $p^{\hat t}=0$ implies 
\begin{equation}\label{eq:g_zero}
g_{\FFF}=0 \text{ and } g_{\bAAA^{\hat t}}=0.  
\end{equation}

For the purpose of deriving a contradiction suppose that $\hat t>0$. Then $\AAA^{\hat t} = \AAA^{\hat t-1} \cup \CCC^{\hat t-1}$, and therefore $\bAAA^{\hat t-1} = \bAAA^{\hat t} \cup \CCC^{\hat t-1}$.  By definition, $p^{\hat t-1}_i<0$ for every $i\in\CCC^{\hat t-1}$.  Since $\CCC^{\hat t-1}\neq\emptyset$, we have $p^{\hat t-1}\neq0$.
Consider the linear system from which $p^{\hat t-1}$ is computed:
\[
  \underbrace{
    \begin{bmatrix}
    B_{\FFF\FFF} & B_{\FFF\bAAA^{\hat t}} & B_{\FFF\CCC^{\hat t-1}} \\ 
    B_{\bAAA^{\hat t}\FFF} & B_{\bAAA^{\hat t}\bAAA^{\hat t}} & B_{\bAAA^{\hat t}\CCC^{\hat t-1}} \\ 
    B_{\CCC^{\hat t-1}\FFF} & B_{\CCC^{\hat t-1}\bAAA^{\hat t}} & B_{\CCC^{\hat t-1}\CCC^{\hat t-1}} \\ 
  \end{bmatrix}
  }_{=: \hat B^{\hat t-1}}
  \underbrace{\begin{pmatrix}p^{\hat t-1}_{\FFF} \\ p^{\hat t-1}_{\bAAA^{\hat t}}\\ p^{\hat t-1}_{\CCC^{\hat t-1}}\end{pmatrix}}_{=:\hat p^{\hat t-1}} = - 
  \underbrace{\begin{pmatrix}
    g_{\FFF} \\ g_{\bAAA^{\hat t}} \\ g_{\CCC^{\hat t-1}}
  \end{pmatrix}}_{=:\hat g^{\hat t-1}}.
\]
With \eqref{eq:g_zero} we obtain 
\begin{equation}\label{eq:gTp}
(g_{\CCC^{\hat t-1}})^Tp^{\hat t-1}_{\CCC^{\hat t-1}} = (g^{\hat t-1}) ^T \hat p^{\hat t-1}= - (\hat p^{\hat t-1})^TB^{\hat t-1}\hat p^{\hat t-1}<0
\end{equation}
because $B$ is positive definite and $p^{\hat t-1}\neq0$.

On the other hand, $\CCC^{\hat t-1}\subseteq \bFFF\setminus \AAA^{\hat t-1}\subseteq \bFFF\setminus \AAA^{\hat t-2}\subseteq \ldots \subseteq \bFFF\setminus \AAA^{0} = \bFFF\setminus \BBB(g)$.  From \eqref{eq:activeset} we then have $g_i<0$ for all $i\in\CCC^{\hat t-1}$.  Also, from the definition of $\CCC^{\hat t-1}$, it is $p^{\hat t-1}_i<0$ for all $i\in\CCC^{\hat t-1}$. Therefore, 
\[
(g_{\CCC^{\hat t-1}})^Tp^{\hat t-1}_{\CCC^{\hat t-1}} = \sum_{i\in\CCC^{\hat t-1}}g_ip_i^{\hat t-1} >0,
\]
in contradiction to \eqref{eq:gTp}.

It follows that $\hat t$ must be zero, and \eqref{eq:g_zero} yields that $g_i=0$ for any $i\not\in\AAA^0=\BBB(\nabla f(x^k))$.  Consequently, \eqref{eq:firstorder} holds.
\end{proof}



\subsection{Line search}
\label{sec:linesearch}

Once a search direction $p^k$ at an iterate $x^k$ has been calculated, the algorithm determines a step size $\alpha^k>0$ to generate the next iterate, $x^{k+1} = P(x^k+\alpha^kp^k)$. The projection ensures that the new iterate is feasible.

For the unconstrained minimization of a nonsmooth function, Lewis and Overton \cite{Lewis2013} use the weak Wolfe conditions \eqref{eq:uncon_armijo} and \eqref{eq:uncon_curvature} to determine whether a trial point $x^{\textup{trial}}=x^k+\alpha p^k$ is acceptable as a new iterate.  Given fixed values for $c_1, c_2\in(0,1)$ with $c_1<c_2$, the first condition,
\begin{equation}\label{eq:uncon_armijo}
    f(x^{\textup{trial}}) \leq f(x^k) + \alpha c_1 \nabla f(x^k)^Tp^k,
\end{equation}
ensures that the objective function decreases by at least a fraction of what is predicted by a linear approximation that is based on the gradient.  Because the objective is nonsmooth, the linear model might be a good approximation only for very small step sizes $\alpha$.  Nevertheless, since $f$ is assumed to be differentiable at $x^k$, condition \eqref{eq:uncon_armijo} can be satisfied as long as $\alpha$ is sufficiently small.

The second condition,
\begin{equation}\label{eq:uncon_curvature}
    \nabla f(x^{\textup{trial}})^T p^k \geq c_2 \nabla f(x^k)^Tp^k,
\end{equation}
imposes that the slope of the function $\phi(\alpha)=f(x^k+\alpha p^k)$ is less steep at $\alpha$ than at $0$, indicating that sufficient progress towards a local minimizer of $\phi(\alpha)$ is made.  With a nonsmooth objective, a local minimizer of $\phi(\alpha)$ might be at a point where $f$ (and hence $\phi$) is nondifferentiable.  If the current iterate is close to such a point, requiring \eqref{eq:uncon_curvature} leads to a step size $\alpha$ that is beyond the point of nondifferentiability.  This observation is crucial and provides the main intuition why the BFGS algorithm works well for nonsmooth problems.
Because the next iterate lies on another ``smooth piece'' of the nonsmooth function, the new gradient is quite different from the current gradient, even when the next iterate is very nearby.  Recalling the definition \eqref{eq:s_and_y} of $s^k$ and $y^k$ in the BFGS update, we see that then the gradient difference $y^k$ is much larger in size than the step $s^k$. Consequently, high curvature in the direction $s^k$ is incorporated into the BFGS update $B^{k+1}$, approximating the infinite curvature at the point of nondifferentiability.

Lewis and Overton \cite{Lewis2013} prove that, in the absence of bounds, a step size $\alpha$ satisfying both \eqref{eq:uncon_armijo} and \eqref{eq:uncon_curvature} always exists, and they provide a bracketing procedure to find it.  We point out that \eqref{eq:uncon_curvature} also guarantees that the $(s^k,y^k)$ pair for the BFGS update satisfies $(s^k)^Ty^k>0$ so that the update is well-defined.

On the other hand, for the minimization of a \emph{smooth} objective function subject to bound constraints, Ferry \cite{ferry2011projected} proposed a generalized Wolfe line search which replaces the search direction by $\bar p^k=T(x^k,p^k)$.  Recalling the definition of $T$ in \eqref{eq:def_T}, we see that $\bar p^k$ is the modified search direction that zeros out all components that would result in an immediate violation of a constraint.
With this, the Wolfe conditions suggested by Ferry \cite{ferry2011projected} are
\begin{align}\label{eq:con_armijo}
    f(x^{\textup{trial}}) &\leq f(x^k) + \alpha c_1 \nabla f(x^k)^T\bar p^k \\
    \nabla f(x^{\textup{trial}})^T T(x^{\textup{trial}}, p^k)  & \geq c_2 \nabla f(x^k)^T\bar p^k. \label{eq:con_curvature}
\end{align}
We adopt these conditions in our context of minimizing a nonsmooth objective.  Algorithm~\ref{alg:generalized-wolfe} describes the corresponding bracketing mechanism.
\begin{algorithm}[t]
\caption{ModifiedWolfe}
\label{alg:generalized-wolfe}
{\bf Inputs:} Current iterate $x$ and a search direction $p$.

{\bf Output:} Step size $\alpha$ to generate next iterate.

{\bf Parameters:} $c_1, c_2\in (0,1)$ with $c_1<c_2$; $\epsilon_{\textup{abs}},\epsilon_{\textup{rel}} > 0$.
\begin{algorithmic}[1]
\State Set $L = 0$
\State Set $U = \max_i \{ \gamma_i \}$, where $\gamma_i \defeq \begin{cases}
\frac{u_i-x_i}{p_i} & p_i>0 \text{ and } x_i \neq u_i;\\
\frac{x_i-l_i}{p_i} & p_i<0 \text{ and } x_i \neq l_i;\\
\infty & \text{otherwise.}
\end{cases}$
\State Set $\alpha = \min(1,U)$.
\State Compute $\bar p=T(x,p)$.
\If{$\bar p=0$}
\State \textbf{terminate} with ``No search direction''.\label{alg:zero_step} 
\Comment Error due to bad search direction
\EndIf
\For{$t=0,1,2,\ldots$}
\State Set $x^{\textup{trial}} = P(x + \alpha \bar p)$.
    \If{\eqref{eq:con_armijo} does not hold} \Comment{Sufficient decrease condition does not hold}
        \State Set $U = \alpha$.\label{alg:U}
    \Else
        \If{\eqref{eq:con_curvature} does not hold} \Comment{Curvature condition does not hold}
           \State  Set $L = \alpha$.\label{alg:L}
        \Else
            \State \Return{$\alpha$}. \Comment{Return step satisfying weak Wolfe conditions}\label{alg:success}
        \EndIf
    \EndIf
\If{$U<\max_i \{ \gamma_i \}$} \Comment{Update step-size}
\State Set $\alpha = \frac{U+L}{2}$.\label{alg:update1}
\Else
\State Set $\alpha = \min(2L,U)$.\label{alg:update2}
\EndIf
\If{$U-L<\epsilon_{\textup{abs}}+\epsilon_{\textup{rel}}L$ } \label{alg:term}
\If{$L>0$}
\State \Return{L}.\label{alg:returnL} \Comment{Return step satisfying sufficient decrease condition}
\Else
\State \textbf{terminate} with ``Line Search Error''.\label{alg:error}\Comment{Error, no suitable step size found}
\EndIf
\EndIf
\EndFor
\end{algorithmic}
\end{algorithm}
Note that, in the absence of bounds, this procedure is identical to the line search algorithm proposed by Lewis and Overton \cite{Lewis2013}.
When $f$ is smooth, Ferry \cite{ferry2011projected} showed that there always exists a step size $\alpha$ that satisfies both \eqref{eq:con_armijo} and \eqref{eq:con_curvature}.
For a nonsmooth objective, such a step size may not exist.  In our method, when a suitable step size cannot be found in step \ref{alg:success}, finite termination is ensured by the termination test in step \ref{alg:term}.

The bracketing mechanism generates a sequence of values for $U$ and $L$ in a way that shrinks the length of the interval $[L,U]$ to zero (see steps \ref{alg:update1} and \ref{alg:update2} together with steps \ref{alg:U} and \ref{alg:L}).  Because it is not clear whether a step size satisfying both \eqref{eq:con_armijo} and \eqref{eq:con_curvature} can be found, the algorithm will attempt only a moderate number of trial step sizes, until the relative interval length is on the order of $\epsilon_{\textup{rel}}$.
Whenever a step size is encountered that satisfies the sufficient decrease condition \eqref{eq:con_armijo}, step~\ref{alg:L} sets $L$ to this value (unless the search is terminated in step~\ref{alg:success}).  Therefore, when step~\ref{alg:returnL} returns a nonzero step size, it is guaranteed that the next iterate will have a smaller objective value, and so the overall optimization algorithm cannot cycle.
On the other hand, if $L$ is zero in step~\ref{alg:error}, the trial step size $U=\alpha$ has become smaller than $\epsilon_{\textup{abs}}$.  In that case, we declare a line search error, which is likely caused by numerical issues in the search direction computation or round-off in the function evaluation.
Finally, it may happen that the computed search direction is such that $P(x+\alpha p)=x$ for any $\alpha>0$.  Then there is no point in conducting a line search and we terminate the optimization with an error message in step~\ref{alg:zero_step}.  This may occur due to 
numerical problems during the step computation.

\subsection{Main Algorithm}
\label{sec:mainalgo}

\begin{algorithm}[t]
\caption{Nonsmooth Quasi-Newton (\algoname)}
\label{alg:nqn}
{\bf Inputs:} Initial point $x^0\in[l,u]$.

{\bf Parameters:} Size of L-BFGS memory $m$; update tolerance $\epsilon_{\textup{skip}}$.
\begin{algorithmic}[1]
\State Initialize storage $\mathcal S$ of L-BFGS curvature pairs to be empty.
\For {$k=0,1,2,\ldots$}
\If{$T(x^k,-\nabla f(x^k)=0$}
\State \Return $x^k$ \label{alg:firstorder} \Comment{Finite termination at stationary point}
\EndIf
\State Choose active set $\mathcal{A}^k$. \label{alg:active_and_step}\Comment{Details specified elsewhere}

\State Compute search direction $p^k$.\label{alg:pk}
\State Compute $\alpha^k=$ \texttt{ModifiedWolfe}$(x^k,p^k)$. \Comment{Perform line search using Algorithm 2}

\State Set $\bar p^k = T(x^k,p^k)$.
\State Set $x^{k+1} = {P}(x^k + \alpha^k \bar p^k)$.\label{alg:proj}

\State Compute curvature pair $(s^k,y^k)$ from \eqref{eq:s_and_y}.

\If{$(s^k)^T y^k > \epsilon_{\textup{skip}} \|s^k\| \|y^k\|  $} \label{alg:skip} \Comment{Discard pair if curvature condition not satisfied}
\State Store $(s^k,y^k)$ in $\mathcal S$.\Comment Update L-BFGS memory
\State If $|\mathcal S|>m$ then discard oldest curvature pair.
\EndIf
\EndFor

\end{algorithmic}
\end{algorithm}

The overall optimization algorithm for solving \eqref{eq:problem} is given in 
Algorithm \ref{alg:nqn}.  Step~\ref{alg:active_and_step} is purposely left vague, because we will explore different alternatives for the active-set selection.  The experiments in the following section consider the following options:
\begin{description}
\item[Variant 1] Choose the active set based on the gradient at the current iterate, $\AAA^k=\BBB(\nabla f(x^k))$.
\item[Variant 2] Choose the active set based on the subgradient approximation  using \eqref{eq:activesetprediction}.
\item[Variant 3] Compute the active set from the correction procedure Algorithm~\ref{alg:correction} with initial guess $\AAA^{\textup{init}} = \BBB(\nabla f(x^k))$. 
\item[Variant 4] Compute the active set from the correction procedure Algorithm~\ref{alg:correction} with initial guess $\AAA^{\textup{init}}$ based on the subgradient approximation  using \eqref{eq:activesetprediction}.
\end{description}
For the last two variants, the search direction is already computed as byproduct of the active set selection and step~\ref{alg:pk} does not require any actual work.



There is no guarantee that the pair $(s^k,y^k)$ pair defined in \eqref{eq:s_and_y} satisfies the curvature condition \eqref{eq:sycurv}, even when the weak Wolfe conditions \eqref{eq:con_armijo} and \eqref{eq:con_curvature} are satisfied, since the actual step $s^k$ might be different from $\alpha^k\bar p^k$, due to the projection in step~\ref{alg:proj}. This is in contrast to the unconstrained case, where \eqref{eq:con_curvature} implies that \eqref{eq:s_and_y} holds. 
To handle this situation, the update is skipped in step~\ref{alg:skip} whenever $(s^k)^T(y^k)\leq\epsilon_{\textup{skip}} \|s^k\| \|y^k\|$.



As mentioned in Section 1, we do not consider any theoretical convergence properties of this method, including the possibilities of stalling at non-stationary points and of spurious termination of the line search. We point out that convergence guarantees remain an open question even when no constraints are present.


\section{Numerical Experiments}
\label{sec:numericalexperiments}
\subsection{Implementation and Problem Set}
\label{sec:problemset}
We implemented Algorithm \ref{alg:nqn} in Python.  We will refer to it as \algoname.  The code for our algorithm can be found in our GitHub repository: {\url{https://github.com/keskarnitish/NQN}}. The values for the various parameters used are summarized in Table \ref{tab:parameters}. In order to solve the quadratic program \eqref{eqn:msng} for the subgradient approximation, we used the CVXOPT package \cite{andersen2013cvxopt}. We rely on the NumPy package \cite{numpy} for linear algebra operations, and Theano \cite{team2016theano} is used to compute derivatives of the objective functions using algorithmic differentiation.

\begin{table}[t]
    \centering
    \begin{tabular}{|c|c|l|}
    \hline
        Parameter & Value & Description \\ \hline \hline
        $m$ & 20 & L-BFGS memory \\ \hline
        $M$ & $20$ & Maximum size of sample set $G^k$ \\ \hline
        $(c_1,c_2)$ & $(10^{-8},0.9)$ & Parameters on Wolfe conditions \\ \hline
        $\epsilon_{\textup{abs}}$ & $10^{-16}$ & Absolute bracketing tolerance\\ \hline
        $\epsilon_{\textup{rel}}$ & $10^{-6}$ & Relative bracketing tolerance\\ \hline        
        $\epsilon_{\textup{skip}}$ & $10^{-8}$ & Tolerance for skipping L-BFGS update \\  \hline
    \end{tabular}
    \caption{Parameter values used for numerical experiments.}
    \label{tab:parameters}
\end{table}

We compare \algoname with other codes for solving \eqref{eq:problem}, namely (i) L-BFGS-B \cite{byrd1995limited}; (ii) L-BFGS-B-NS \cite{lbfgsbns}; and (iii) LMBM-B \cite{karmitsa2010limited}. While L-BFGS-B is not designed to solve nonsmooth problems, we nonetheless include it owing to its documented success at solving smooth bound-constrained problems. We include L-BFGS-B-NS and LMBM-B since they are specifically designed to solve \eqref{eq:problem} and have shown competitive performance on variety of tasks. The former is identical to the L-BFGS-B algorithm except that it uses the weak Wolfe line search as opposed to the strong Wolfe line search. LMBM-B \cite{karmitsa2010limited} combines the limited-memory bundle method (LMBM) \cite{lmbmnew,haarala2007globally} with a Cauchy-point-based active-set selection strategy similar to the one in L-BFGS-B. The LMBM-B method generates steps using a subgradient bundle in conjunction with an L-BFGS/SR-1 updating scheme to gain curvature information. 

To make sure each solver obtains the same function and derivative information, we implemented Python wrappers around the Fortran codes written by the respective authors. The original Fortran codes can be found at \url{users.iems.northwestern.edu/~nocedal/lbfgsb.html}, \url{github.com/wilmerhenao/L-BFGS-B-NS} and \url{napsu.karmitsa.fi/lmbm/} for L-BFGS-B, L-BFGS-B-NS and LMBM-B respectively. We exclude other methods, including gradient-sampling methods, since we found their performance to be inferior to the methods listed above.


To explore the effect of the different active-set identification mechanisms, we propose two generalizations of \eqref{eq:myopic} as test problems for nonsmooth optimization. The two problems are defined for even values of $n$; we call them \texttt{Myopic\_Decoupled} and \texttt{Myopic\_Coupled}. They are given as
\begin{align}\label{eq:myopic_decoupled}
    \min_{x\in\RR^n} & \sum_{i\in \{1,3,\cdots,n-1\}} |x_i - x_{i+1}| + (x_i + 0.1 x_{i+1})^2,
\end{align} and
\begin{align}\label{eq:myopic_coupled}
    \min_{x\in\RR^n} & \sum_{i=1}^{n-1} |x_i - x_{i+1}| + (x_i + 0.1 x_{i+1})^2,
\end{align} 
respectively; the bound constraints for these problems are discussed below. The attributes \texttt{Decoupled} and \texttt{Coupled} refer to whether or not the problem is separable. 

In addition, we use several test problems from the literature that are listed  in Table \ref{tab:problems} along with their references\footnote{The objective function for problem 20, suggested by Michael Overton \cite{overtonprivate}, is $\max\left\{|x_1|, \max_{i\in \{2,3,\cdots,n\}}|x_{i-1} - x_i| \right\}$.}.
\begin{table}
    \centering
    \begin{tabular}{|r|l|l|}
    \hline
         Problem Number & Test Problem & Reference \\ \hline \hline

        1  & \verb|Active_Faces| & \cite{lmbmnew} \\ \hline
        2  & \verb|Chained_CB3_1| & \cite{lmbmnew} \\ \hline
        3  & \verb|Chained_CB3_2| & \cite{lmbmnew} \\ \hline
        4                       & \verb|Chained_Crescent_1| & \cite{lmbmnew} \\ \hline
        5  & \verb|Chained_Crescent_2| & \cite{lmbmnew} \\ \hline
        6  & \verb|Chained_LQ| & \cite{lmbmnew} \\ \hline
7  & \verb|Chained_Mifflin_2| & \cite{lmbmnew} \\ \hline
8 & \verb|Convex_Nonsmooth| & \cite{skajaa2010limited} \\ \hline
9          & \verb|L1| & \cite{skajaa2010limited} \\ \hline
10 & \verb|L1HILB| & \cite{lmbmnew} \\ \hline
11          & \verb|L2| & \cite{Lewis2013} \\ \hline
12 & \verb|MAXHILB| & \cite{lmbmnew} \\ \hline
13          & \verb|MAXQ| & \cite{lmbmnew} \\ \hline
 14         & \verb|Modified_Rosenbrock_1| & \cite{lbfgsbns} \\ \hline
 15         & \verb|Modified_Rosenbrock_2| & \cite{lbfgsbns} \\ \hline
 16         & \verb|Myopic_Coupled| & \eqref{eq:myopic_coupled} \\ \hline
 17         & \verb|Myopic_Decoupled| & \eqref{eq:myopic_decoupled} \\ \hline
18 & \verb|Nesterov_1| & \cite{gurbuzbalaban2012nesterov} \\ \hline
19          & \verb|Nesterov_2| & \cite{gurbuzbalaban2012nesterov} \\ \hline
20          & \verb|Nesterov_3| & \cite{overtonprivate} \\ \hline
21          & \verb|Nonsmooth_Brown| & \cite{lmbmnew} \\ \hline
22            & \verb|TEST29_2| & \cite{ufo} \\ \hline
23          & \verb|TEST29_6| & \cite{ufo} \\ \hline
24          & \verb|TEST29_22| & \cite{ufo} \\ \hline
25          & \verb|TEST29_24| & \cite{ufo} \\ \hline
    \end{tabular}
    \caption{Test problems used in numerical experiments.}
    \label{tab:problems}
\end{table}
Since these test problems are for unconstrained optimization, we follow an approach similar to \cite{karmitsa2010adaptive} in that we add the following bounds to all problems:
\begin{align*}
    l_i &= \begin{cases}
    [x^\star_{\textup{uncon}}]_i - 5.5 & \text{if } \mod(i,2)=0\\
    -100 & \text{if } \mod(i,2)=1
    \end{cases}\\
    u_i &= \begin{cases}
    [x^\star_{\textup{uncon}}]_i -0.5 & \text{if } \mod(i,2)=0\\
    100  & \text{if } \mod(i,2)=1
    \end{cases}
\end{align*}
where $x^\star_{\textup{uncon}}$ is the unconstrained global minimizer which is known for all problems in closed form. By construction, for all problems, the unconstrained minimizer lies outside the bounds.
For each of the 25 problems, ten starting points were generated randomly via a uniform distribution $U(-2,2)$ centered at the midpoint of the bounds, giving rise to a total of 250 instances.
Each code was run until the number of gradient evaluations exceeded $100n$ or an error occurred.

\subsection{Effect of Active-Set Prediction and Correction Mechanism}
\label{sec:effect-of-activeset}
We begin by investigating the efficacy of the active-set prediction, the correction mechanism, and their interplay, using the four variants given in Section~\ref{sec:mainalgo}.


\begin{figure}[t] 
\begin{subfigure}{0.48\textwidth}
\includegraphics[width=\linewidth]{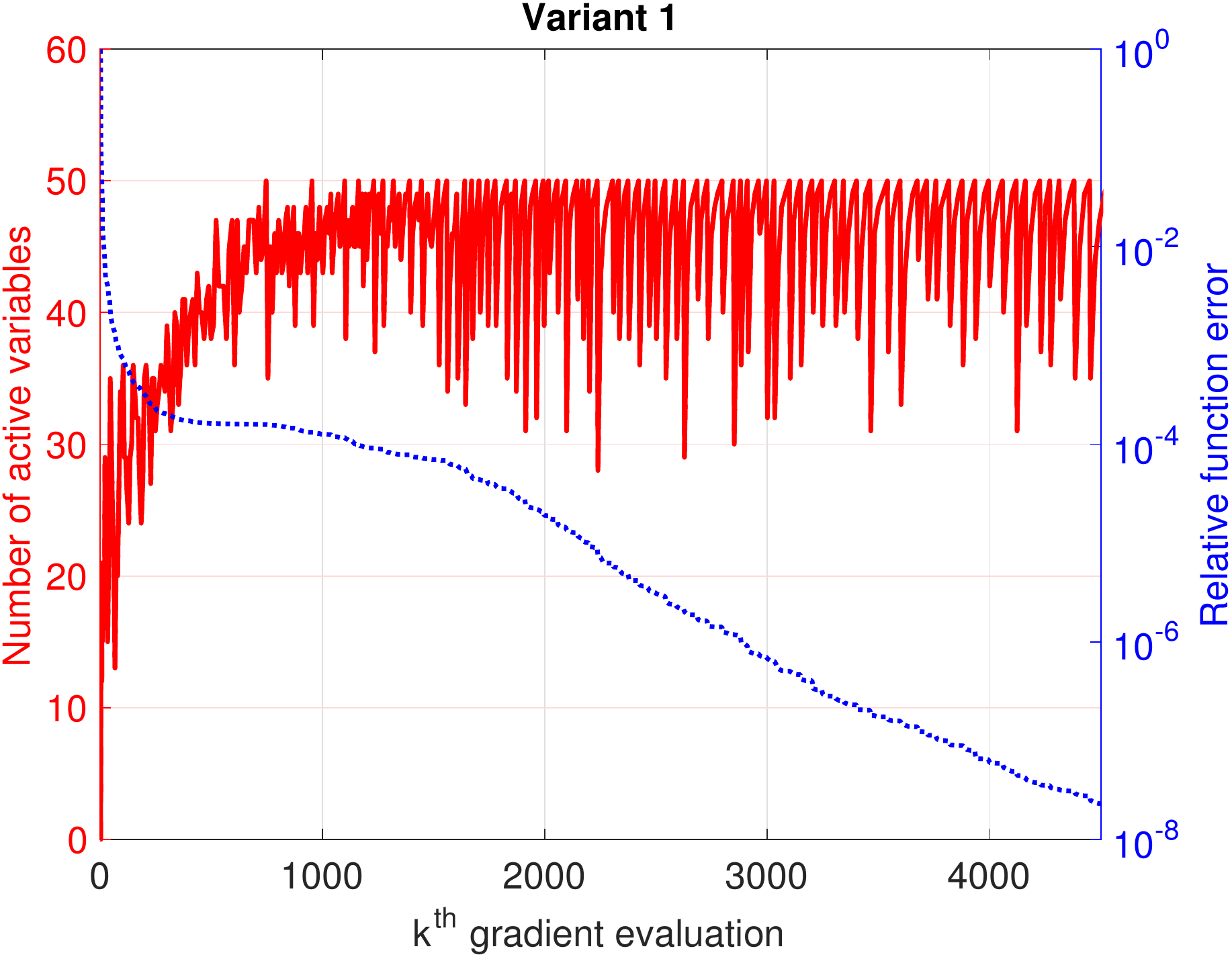}
\end{subfigure}\hspace*{\fill}
\begin{subfigure}{0.48\textwidth}
\includegraphics[width=\linewidth]{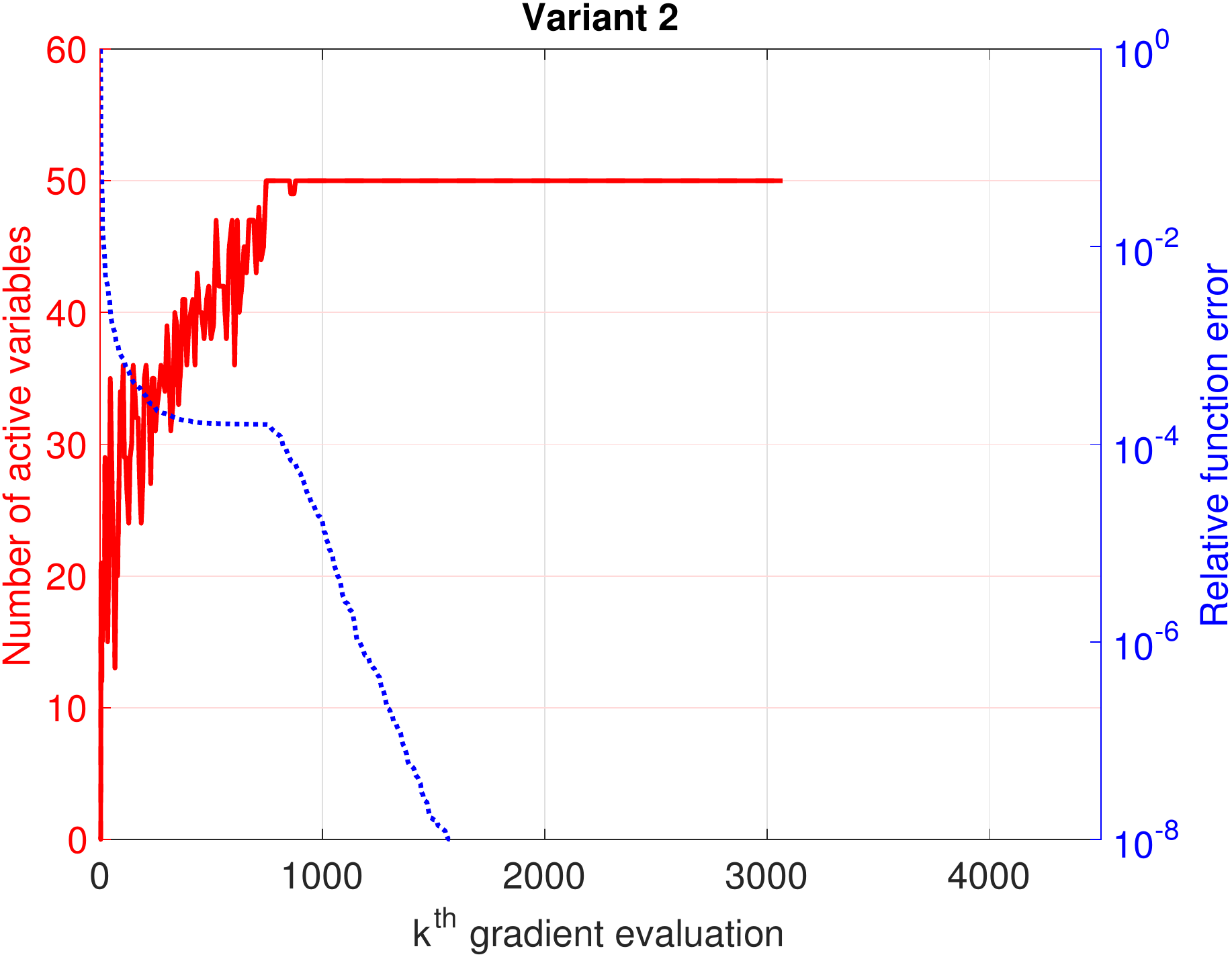}
\end{subfigure}

\medskip
\begin{subfigure}{0.48\textwidth}
\includegraphics[width=\linewidth]{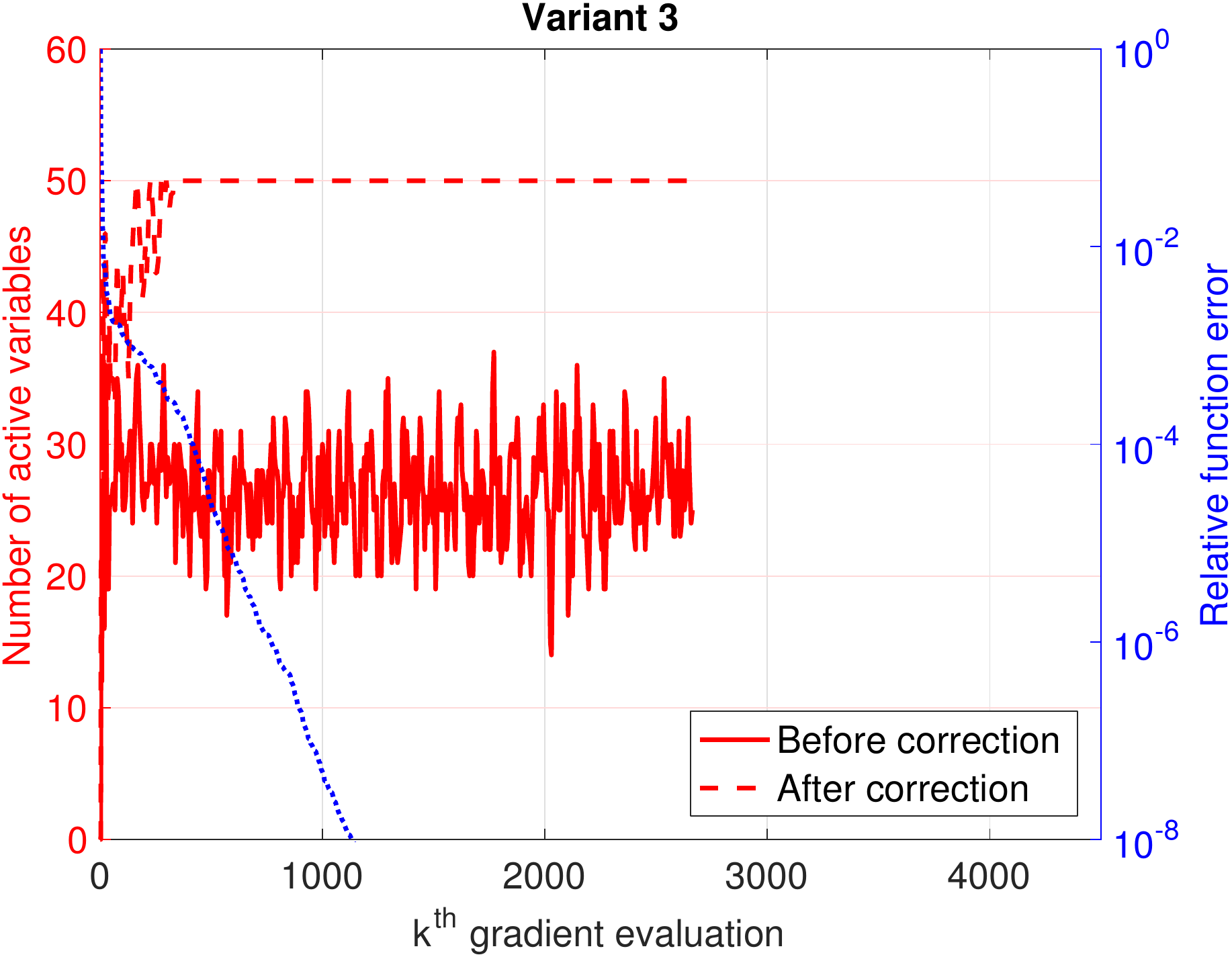}
\end{subfigure}\hspace*{\fill}
\begin{subfigure}{0.48\textwidth}
\includegraphics[width=\linewidth]{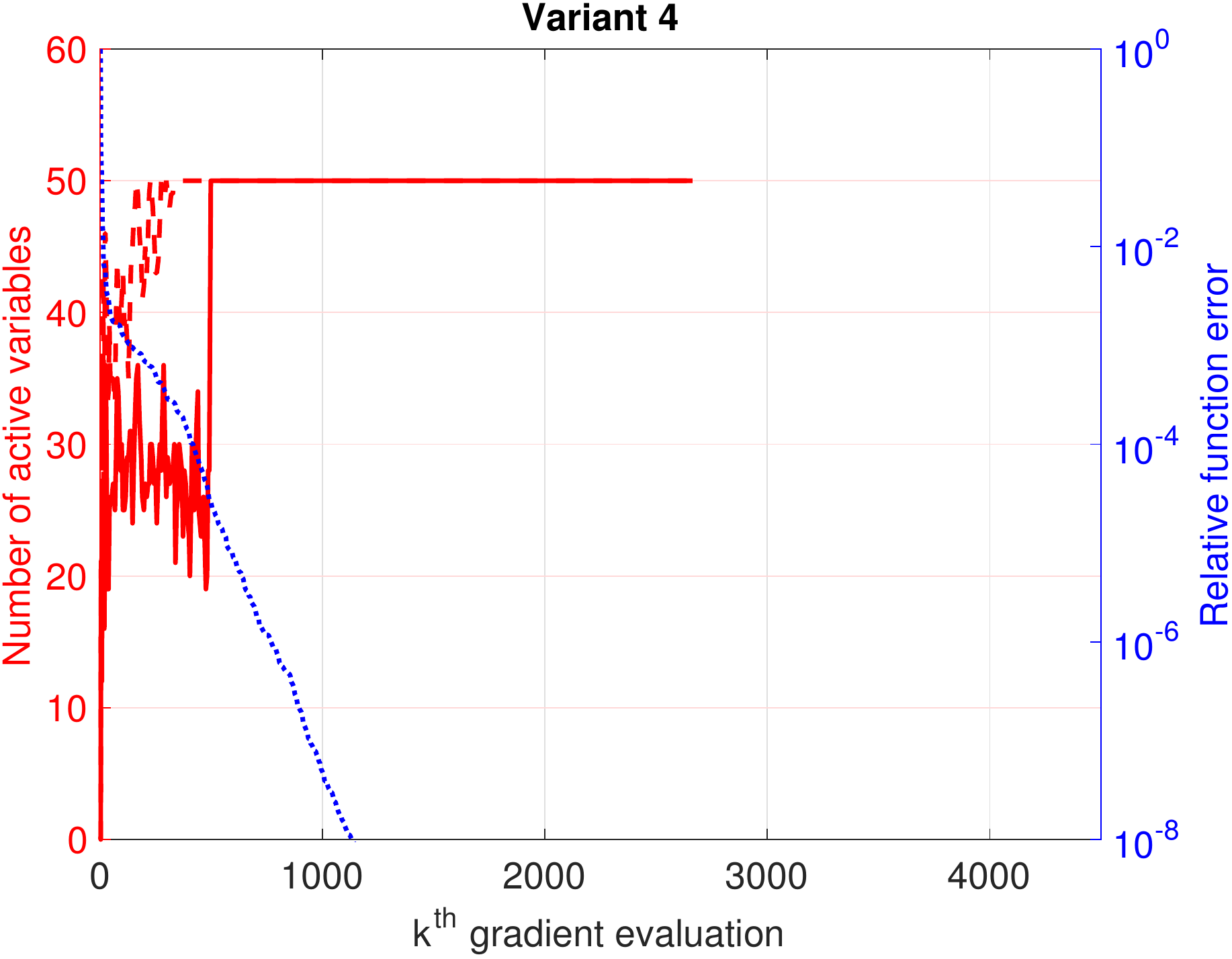}
\end{subfigure}
    \cprotect \caption{A comparison of the four variants of the algorithm on the \verb|Myopic_Decoupled| problem.}
    \label{fig:four-way-comparison-myopic}
\end{figure}

\begin{figure}[t] 
\begin{subfigure}{0.48\textwidth}
\includegraphics[width=\linewidth]{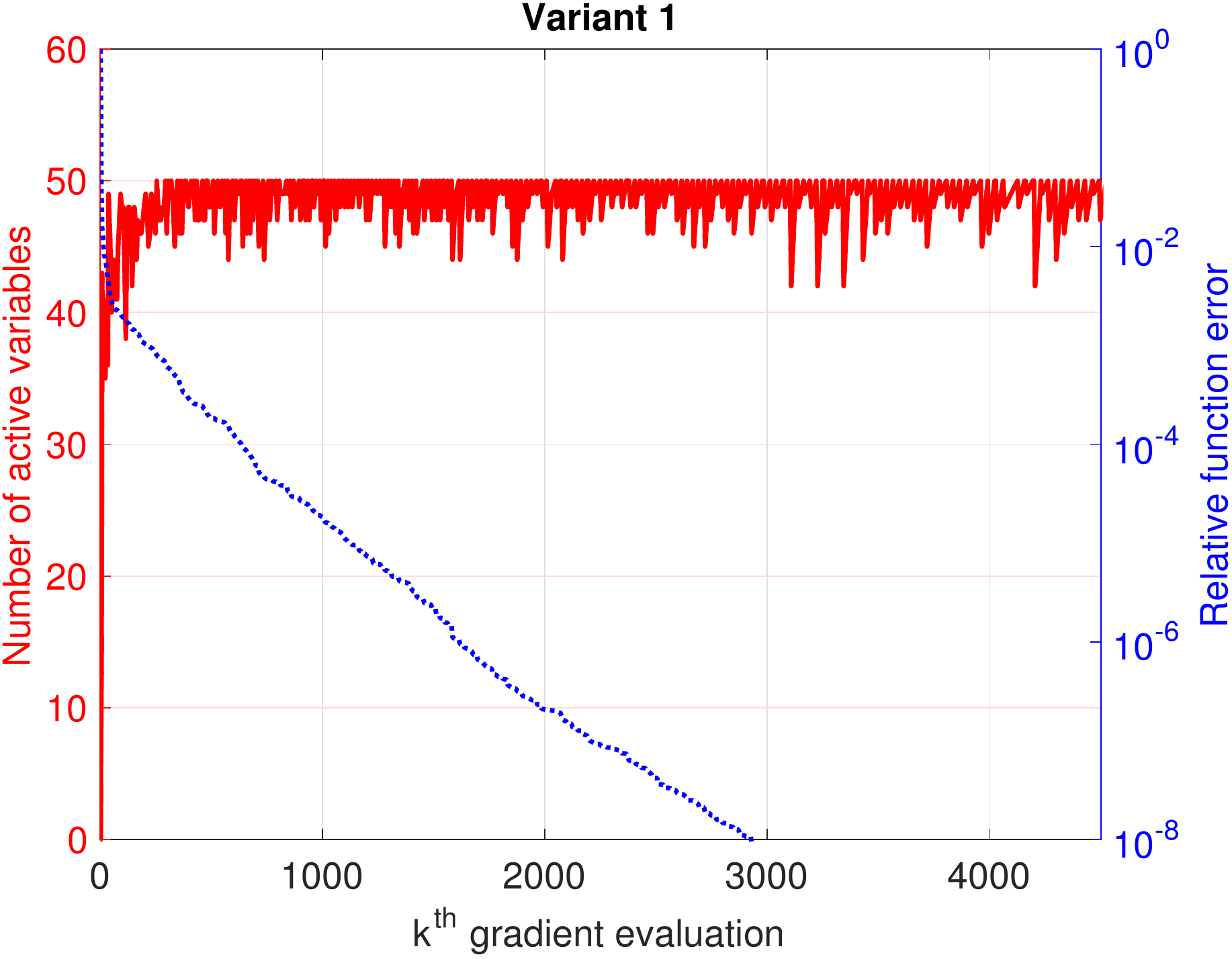}
\end{subfigure}\hspace*{\fill}
\begin{subfigure}{0.48\textwidth}
\includegraphics[width=\linewidth]{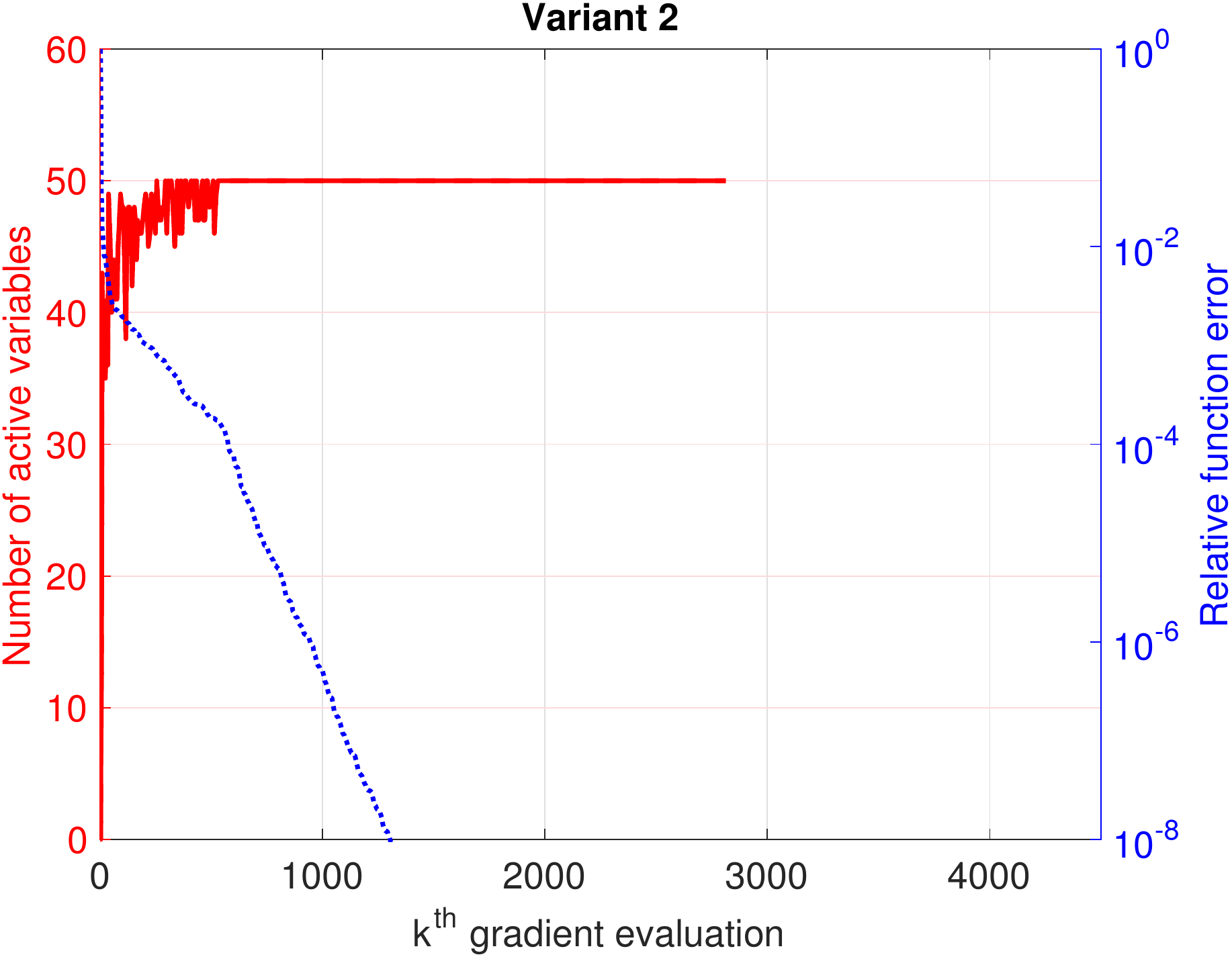}
\end{subfigure}

\medskip
\begin{subfigure}{0.48\textwidth}
\includegraphics[width=\linewidth]{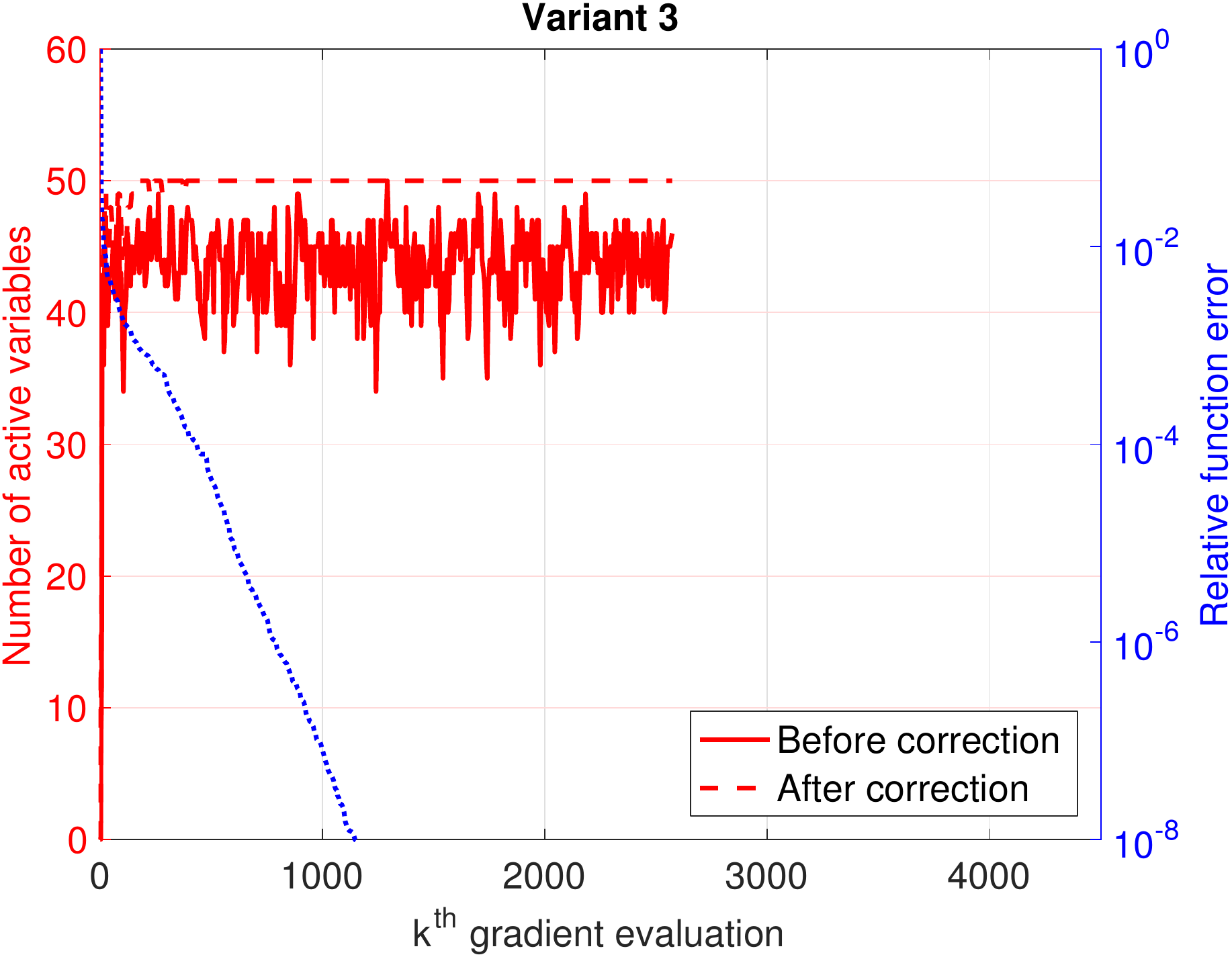}
\end{subfigure}\hspace*{\fill}
\begin{subfigure}{0.48\textwidth}
\includegraphics[width=\linewidth]{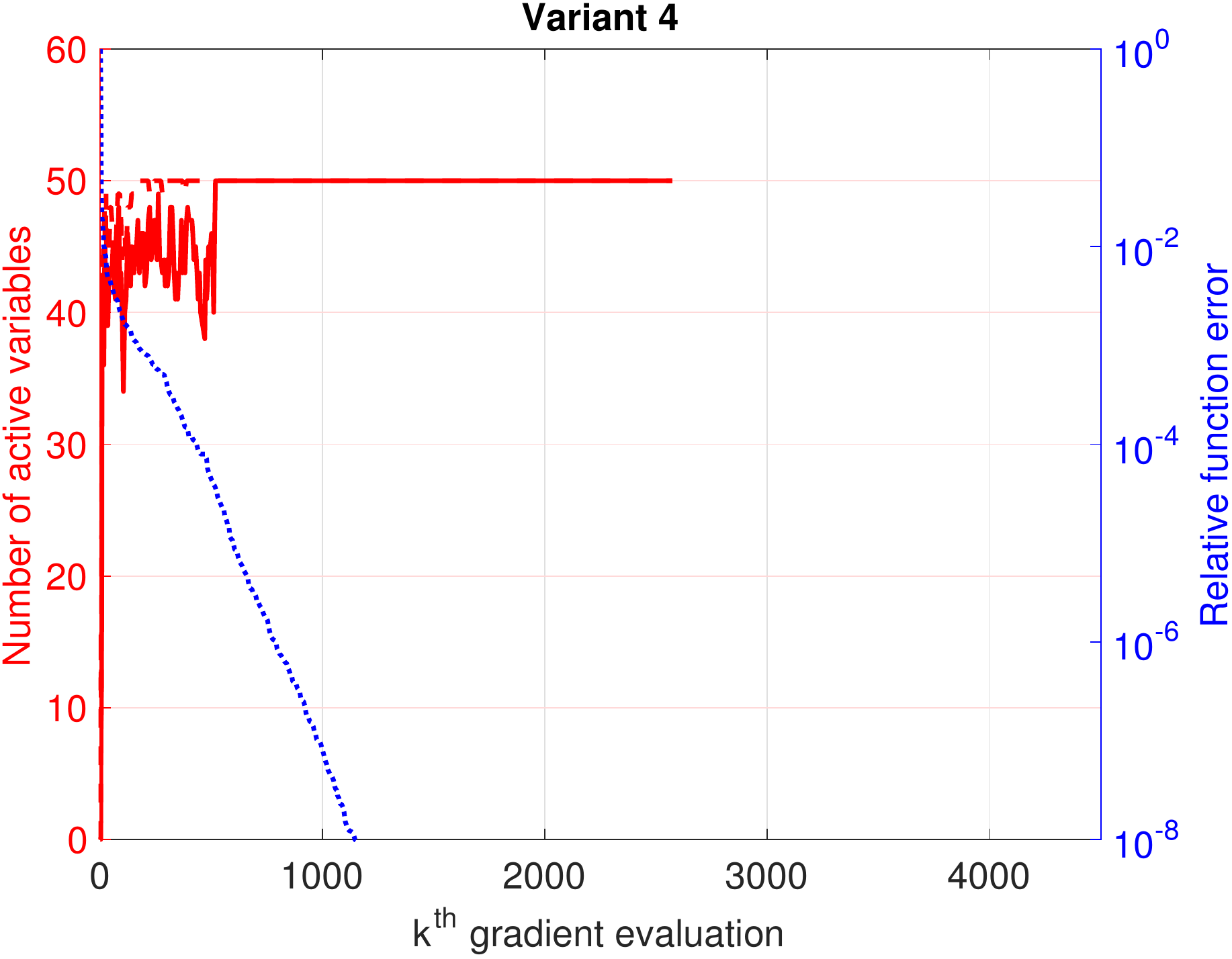}
\end{subfigure}
    \cprotect \caption{A comparison of the four variants of the algorithm on the \verb|Myopic_Coupled| problem.}
    \label{fig:four-way-comparison-myopic-coupled}
\end{figure}

Let us first consider the \texttt{Myopic\_Decoupled} and \texttt{Myopic\_Coupled} problems with $n=100$ for one particular starting point. These problems are designed specifically to highlight the failure of gradient-based active-set prediction strategies.  Figures~\ref{fig:four-way-comparison-myopic} and \ref{fig:four-way-comparison-myopic-coupled} detail the behavior of the algorithm over the course of the optimization.
In each plot, the dotted blue line depicts the progress in objective function, measured as $(f(x^k)-f(x^\star))/(f(x^0)-f(x^\star))$, where $x^\star$ is the optimal solution.
The solid red line gives the size of the (initial) active set $\AAA^k$.  For the variants that employ the active set correction strategy, the dashed red line gives the size of the active set at the end of Algorithm~\ref{alg:correction}.

As one might expect, Variant 1, which uses the myopic gradient and no correction strategy, fails to identify the optimal active set (which contains 50 variables) and its guess $\AAA^k$ keeps fluctuating.  The reduction in the objective function is significantly slower compared to the other variants.
When the subgradient approximation is used in Variant 2, the objective function decreases faster, and after a certain number of iterations, the active set settles to the optimal active set.
When the correction strategy is used in Variants 3 and 4, the optimal active set is identified more quickly, and the reduction in the objective is even faster.
In these experiments, there is very little difference in performance between the two initializations of $\AAA^{\textup{init}}$ in Algorithm~\ref{alg:correction}.
We observe similar behavior for larger dimensions of this problem. Further, rapid fluctuations in the active set are also seen in the other algorithms (L-BFGS-B, L-BFGS-B-NS and LMBM-B) which also employ gradient-based identification strategies.


\begin{figure} 
\begin{subfigure}{0.48\textwidth}
\includegraphics[width=\linewidth]{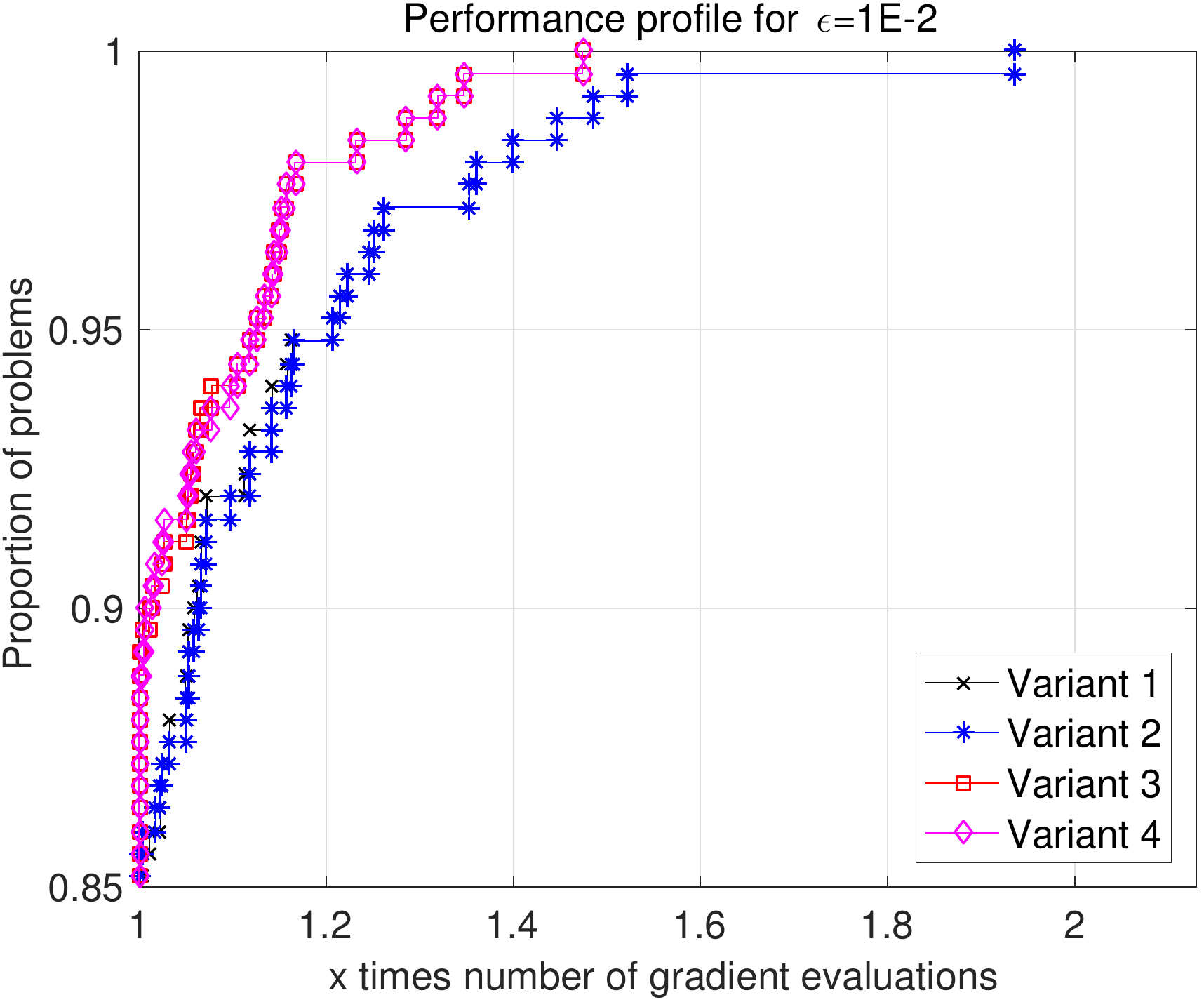}
\end{subfigure}\hspace*{\fill}
\begin{subfigure}{0.48\textwidth}
\includegraphics[width=\linewidth]{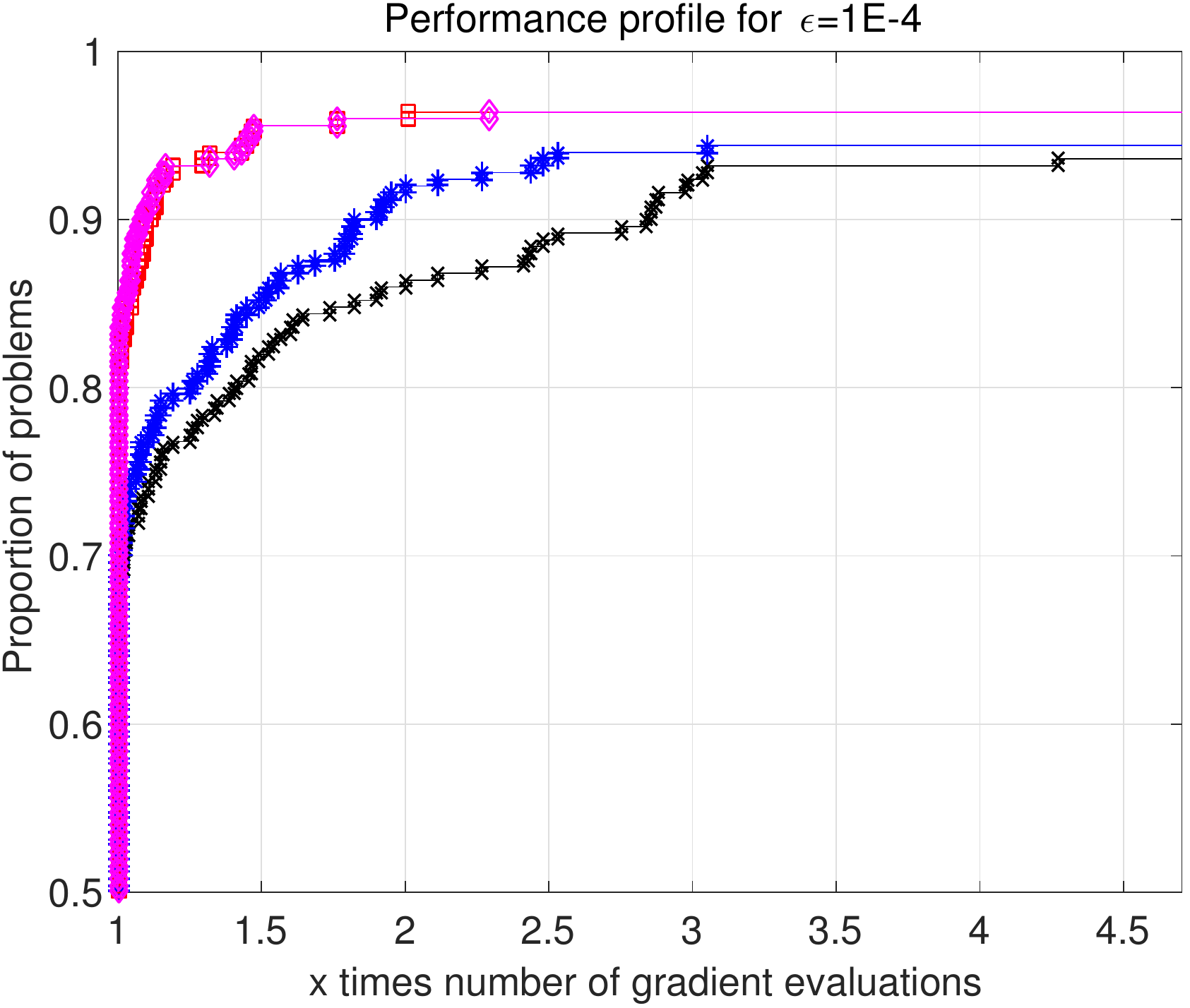}
\end{subfigure}

\medskip
\begin{subfigure}{0.48\textwidth}
\includegraphics[width=\linewidth]{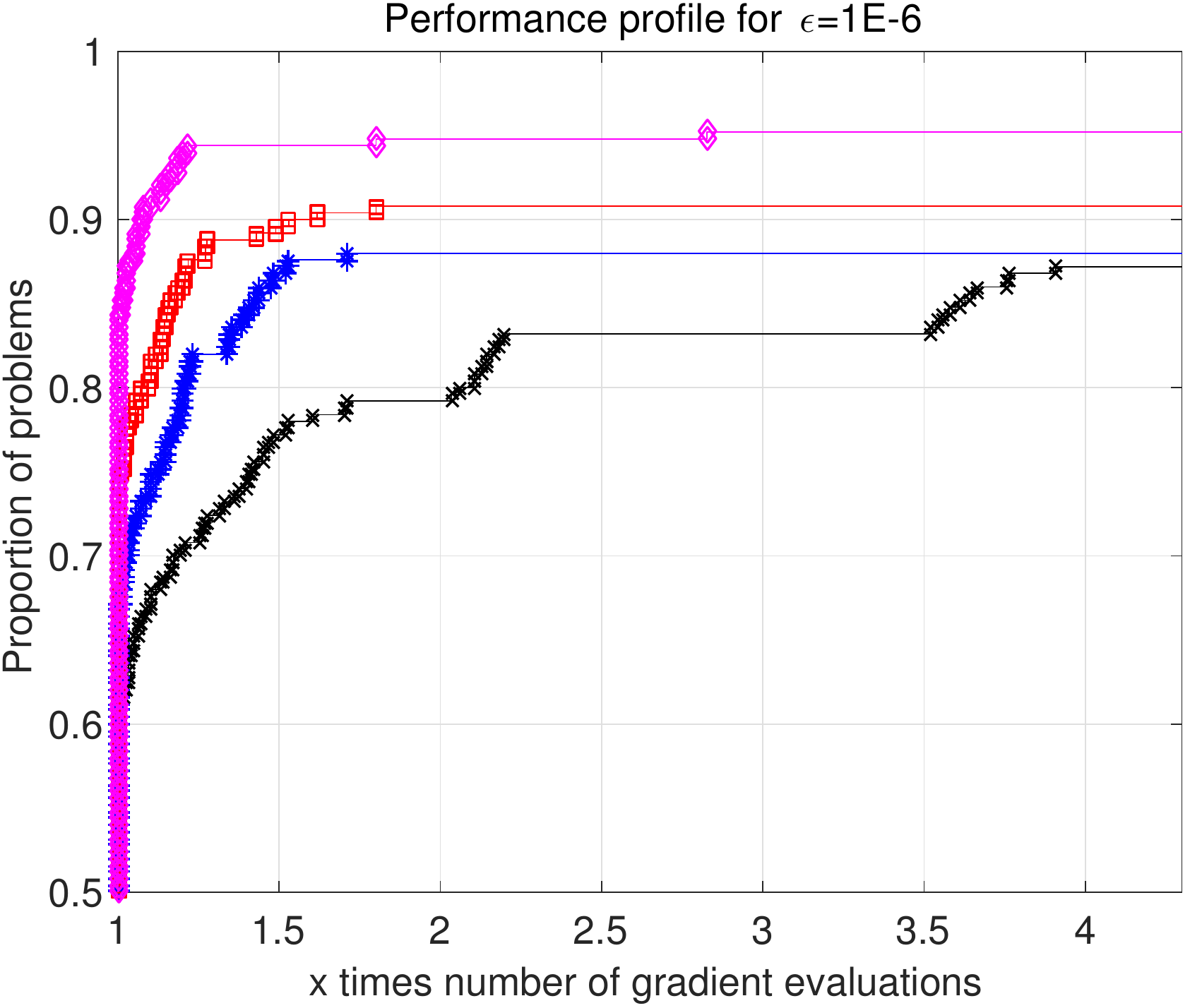}
\end{subfigure}\hspace*{\fill}
\begin{subfigure}{0.48\textwidth}
\includegraphics[width=\linewidth]{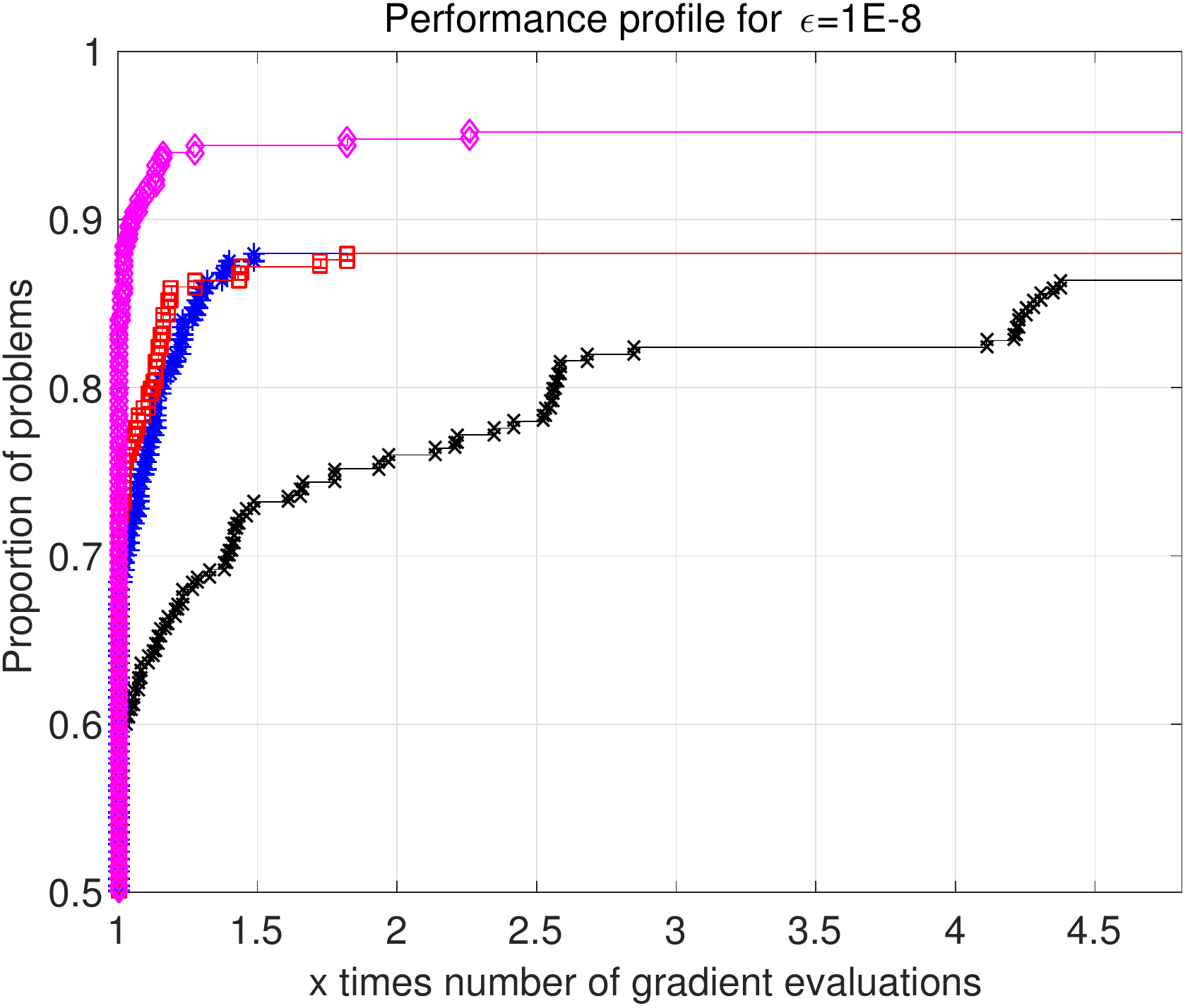}
\end{subfigure}

    \cprotect \caption{Dolan-Mor\'e performance profiles comparing the four variants of the algorithm on $250$ test problems for $\epsilon=10^{-2},10^{-4},10^{-6}$ and $\epsilon=10^{-8}$.}
\label{fig:four-way-comparison-all}
\end{figure}

Next we assess the relative performance of the different variants for the entire set of 250 instances with $n=100$. Figure~\ref{fig:four-way-comparison-all} presents Dolan-Mor\'e performance profiles \cite{dolan2002benchmarking} with respect to the number of gradient evaluations. These profiles rely on a condition to determine when a run is deemed converged. For this purpose, given a tolerance $\epsilon>0$, we use the test
\begin{align}
    \label{eq:termination}
    \frac{f(x^k)-f^\star}{f(x^0)-f^\star}&<\epsilon
\end{align} where $f^\star$ is the best value found by any of the methods for the same instance. We present plots for four values of $\epsilon$ viz., $10^{-2}, 10^{-4}$, $10^{-6}$ and $10^{-8}$. 

\begin{figure}
\centering
\includegraphics[width=0.75\textwidth]{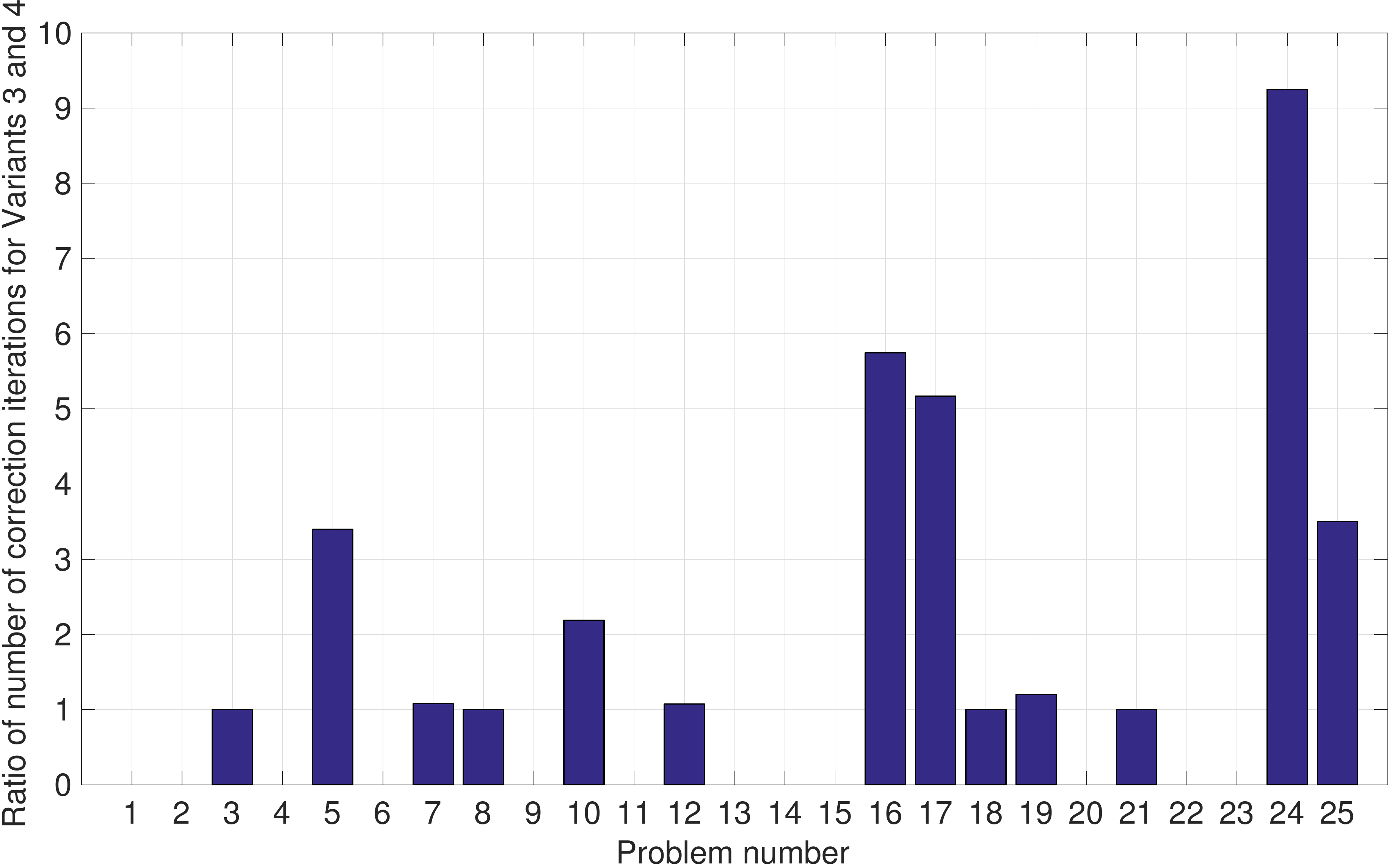}
\caption{Average ratio of number of corrections for Variants 3 and 4 for all problems for $\epsilon=10^{-4}$ and $n=100$. A ratio of $0$ indicates that both methods did not need any corrections.}
\label{fig:hist}
\end{figure}

This experiment reveals results similar to those found in Figures \ref{fig:four-way-comparison-myopic} and \ref{fig:four-way-comparison-myopic-coupled}. 
Variant 1 shows the worst behavior, and the use of the subgradient approximation in Variant 2 improves the convergence rate.  Using the corrective strategy gives the best performance, with an advantage for Variant 4 when a very tight tolerance is used.

Variants 3 and 4 incur different computational costs per iteration in Algorithm~\ref{alg:nqn}.  In addition to the cost of the corrective loop, Variant 4 relies on the solution of the quadratic program \eqref{eq:activesetprediction} for the computation of the subgradient approximation.  Figures \ref{fig:four-way-comparison-myopic} and \ref{fig:four-way-comparison-myopic-coupled} suggest that Variant 4 might require fewer iterations in the corrective loop in Algorithm~\ref{alg:correction} than Variant 3, since its initial active set $\AAA^{\textup{init}}$ is a better guess of the final active set returned by the correction procedure.
In Figure \ref{fig:hist}, we present the average ratio of the number of correction iterations for Variants 3 over 4.
Indeed, Variant 3 needs up to 10 times as many correction iterations as Variant 4 to achieve similar performance. 

Nevertheless, since the solution of the quadratic program \eqref{eq:activesetprediction} comes at a significant computational cost, we used Variant 3 for the remaining experiments.  Also, Variant 3 is consistent with Lemma~\ref{lem:p0}, so that we would encounter a zero step from the correction loop only when the current iterate is already stationary.

\subsection{Comparison with Other Methods}
\label{sec:experiments-academic}

\begin{figure} 
\begin{subfigure}{0.48\textwidth}
\includegraphics[width=\linewidth]{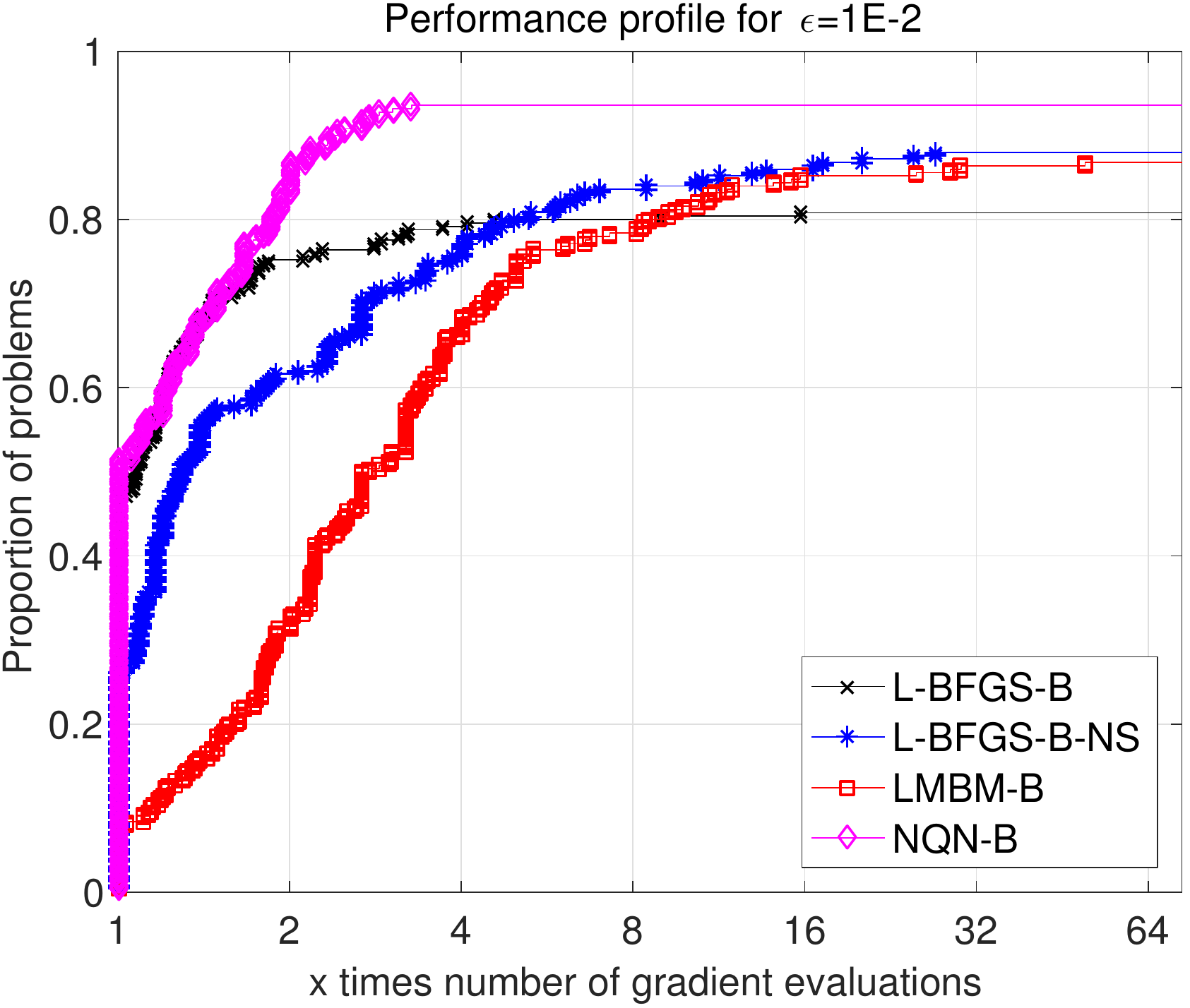}
\end{subfigure}\hspace*{\fill}
\begin{subfigure}{0.48\textwidth}
\includegraphics[width=\linewidth]{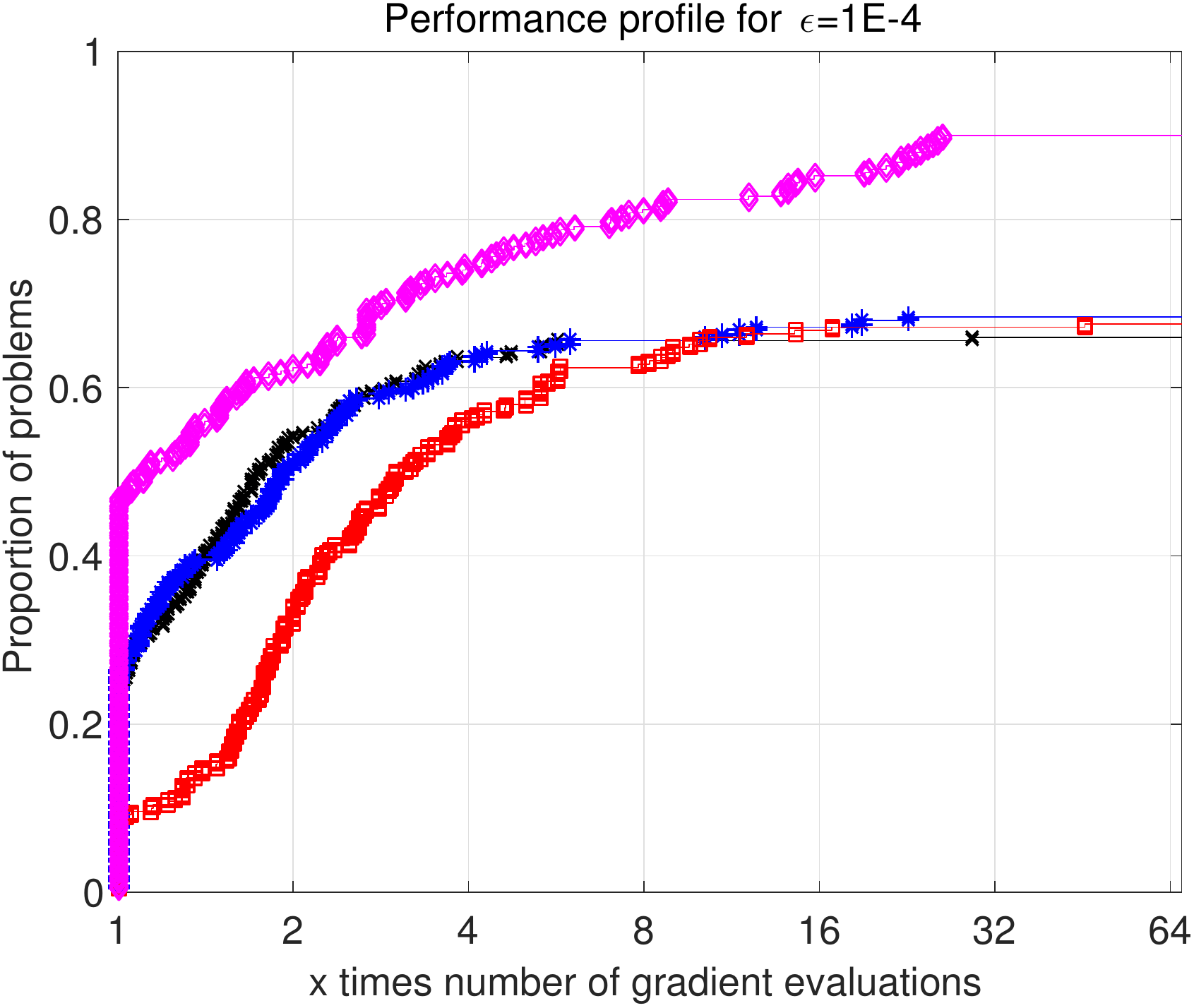}
\end{subfigure}
\caption{Dolan-Mor\'e performance profiles of gradient evaluations for $250$ test problems for $\epsilon=10^{-2}$ and $\epsilon=10^{-4}$ with $n=100$. }
    \label{fig:dolanmore_N100}
\end{figure}

\begin{figure} 
\begin{subfigure}{0.48\textwidth}
\includegraphics[width=\linewidth]{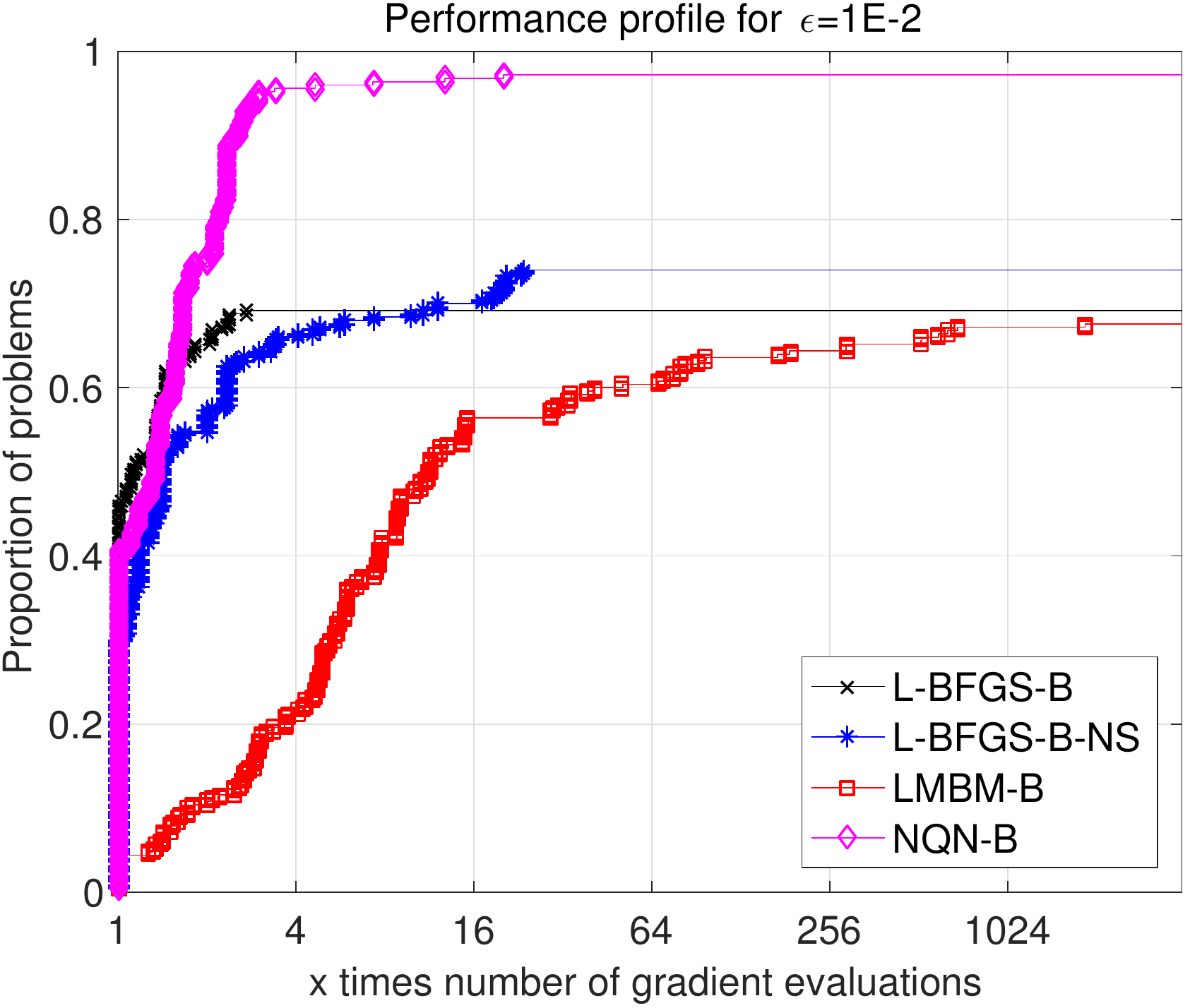}
\end{subfigure}\hspace*{\fill}
\begin{subfigure}{0.48\textwidth}
\includegraphics[width=\linewidth]{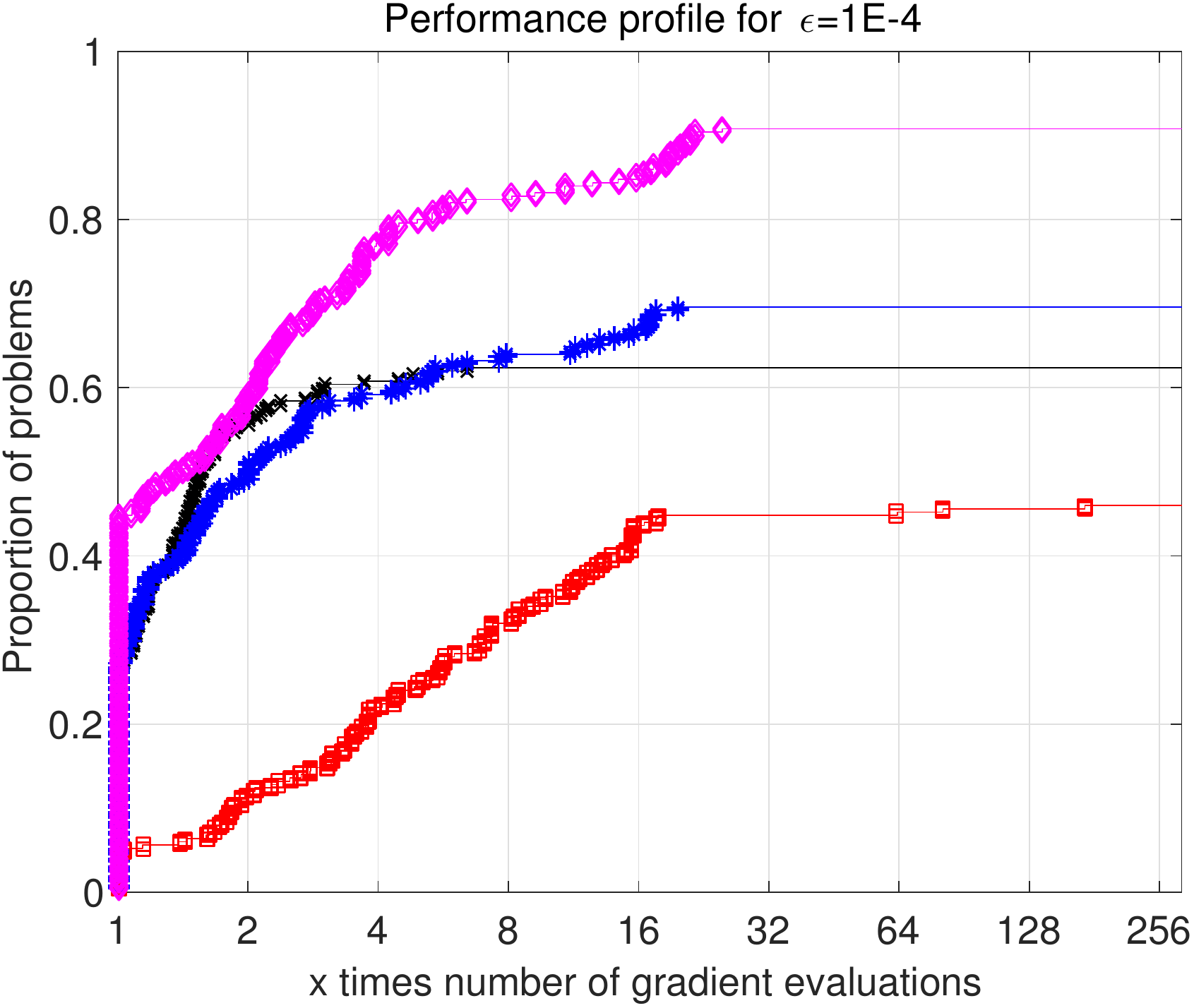}
\end{subfigure}
\caption{Dolan-Mor\'e performance profiles of gradient evaluations for $250$ test problems for $\epsilon=10^{-2}$ and $\epsilon=10^{-4}$ with $n=1000$. }
    \label{fig:dolanmore_N1000}
\end{figure}

\begin{figure} 
\begin{subfigure}{0.48\textwidth}
\includegraphics[width=\linewidth]{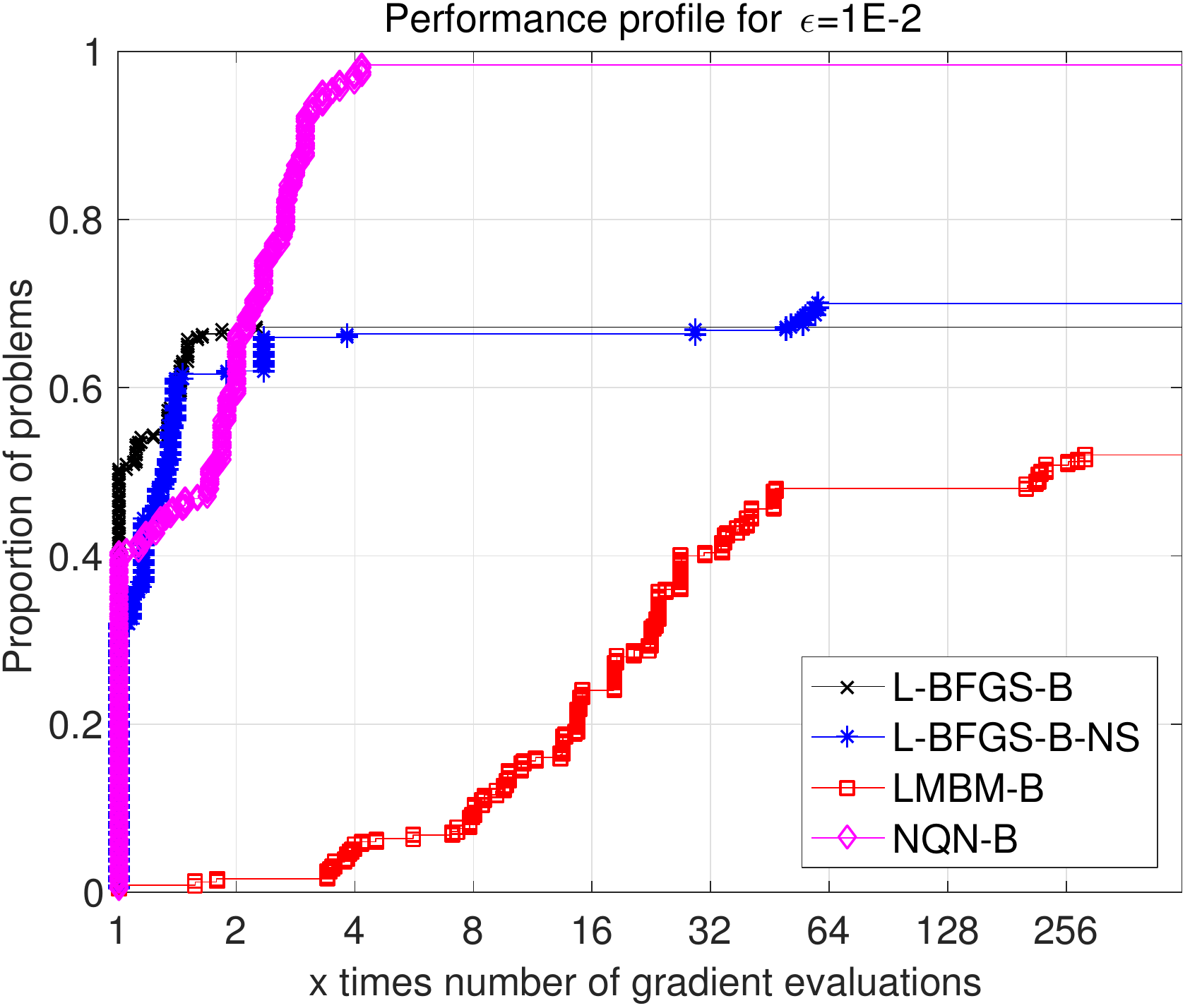}
\end{subfigure}\hspace*{\fill}
\begin{subfigure}{0.48\textwidth}
\includegraphics[width=\linewidth]{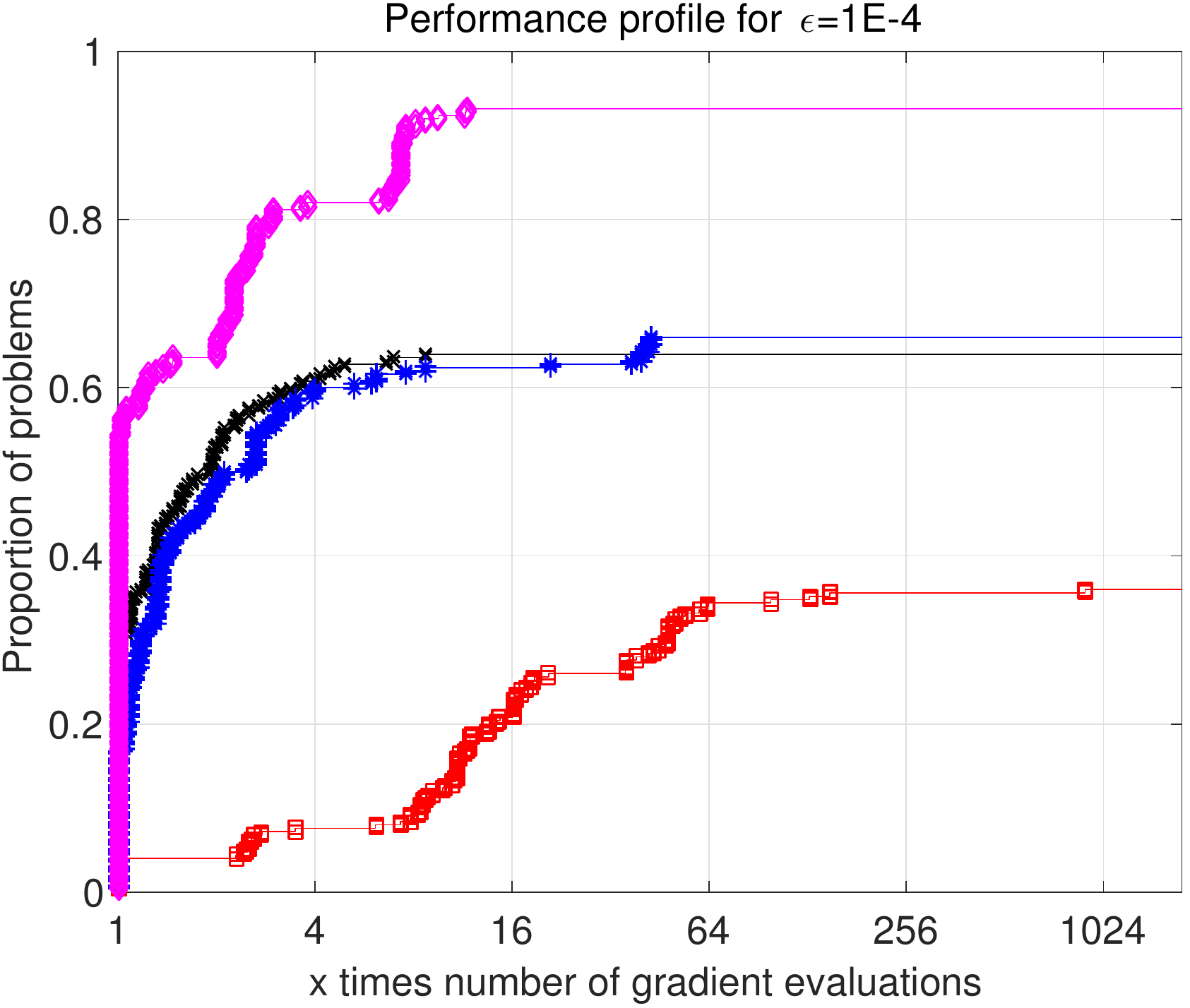}
\end{subfigure}
\caption{Dolan-Mor\'e performance profiles of gradient evaluations for $250$ test problems for $\epsilon=10^{-2}$ and $\epsilon=10^{-4}$ with $n=10000$. }
    \label{fig:dolanmore_N10000}
\end{figure}

We now compare \algoname with L-BFGS-B, L-BFGS-B-NS, and LMBM-B on the 250 test instances. Figures \ref{fig:dolanmore_N100}, \ref{fig:dolanmore_N1000}, and \ref{fig:dolanmore_N10000} correspond to three sets of experiments, with $n=100$, $n=1000$, and $n=10000$, respectively. We present performance profiles for two values of $\epsilon$, viz. $10^{-2}$ and $10^{-4}$. We do not report experiments for $\epsilon=10^{-6}$ and $\epsilon=10^{-8}$ since the relative error is based on the best function value obtained by any of the methods.  Tighter tolerances for $\epsilon$ would magnify insignificant differences between methods when neither are very close to an optimal solution. 
It can be seen that the proposed algorithm performs better than the other methods across different tolerances $\epsilon$ and problem sizes $n$. The figures show that \algoname is able to find a lower objective on more problems and requires fewer gradient evaluations. 
This difference is particularly pronounced for tight tolerances and large problem sizes.

In Table \ref{tab:failures}, we summarize the occurrence for failures of the various methods for tolerances of $\epsilon=10^{-2}$ and $\epsilon=10^{-4}$. The flag \OK indicates that the termination criterion was satisfied at some iteration, \MAX corresponds to reaching maximum number of gradient evaluations, and \OTHER implies other failures which include solver-specific causes such as spurious termination of the line search, numerical issues, or convergence to a non-stationary point. 
For \algoname we break down the number of \OTHER failures into convergence to a point with no feasible direction in step~\ref{alg:zero_step} of Algorithm \ref{alg:generalized-wolfe} (first number) and line search failure in step~\ref{alg:error} of Algorithm \ref{alg:generalized-wolfe} (second number).
%

\begin{table}
    \centering
    \begin{tabular}{|c||c|c|c||c|c|c|}
    \hline 
        Flag &  \OK & \MAX & \OTHER &  \OK & \MAX & \OTHER \\ \hline
          & \multicolumn{3}{|c||}{$\epsilon=10^{-2}$} & \multicolumn{3}{c|}{$\epsilon=10^{-4}$}  \\ \hline
        
        \multicolumn{7}{|c|}{$n=100$} \\ \hline
L-BFGS-B & 202 & 0 & 48 & 165 & 0 & 85 \\
L-BFGS-B-NS & 220 & 25 & 5 & 171 & 58 & 21 \\
LMBM-B & 217 & 16 & 17 & 169 & 64 & 17 \\
NQN & 234 & 12 & 4 + 0 & 225 & 17 & 7 + 1 \\ \hline   

        \multicolumn{7}{|c|}{$n=1000$} \\ \hline
L-BFGS-B & 173 & 4 & 73 & 156 & 8 & 86 \\
L-BFGS-B-NS & 185 & 61 & 4 & 174 & 68 & 8 \\
LMBM-B & 169 & 11 & 70 & 115 & 54 & 81 \\
NQN & 243 & 6 & 1 + 0 & 227 & 22 & 1 + 0 \\ \hline
        \multicolumn{7}{|c|}{$n=10000$} \\ \hline
L-BFGS-B & 168 & 17 & 65 & 160 & 21 & 69 \\
L-BFGS-B-NS & 175 & 73 & 2 & 165 & 82 & 3 \\
LMBM-B & 130 & 30 & 90 & 90 & 70 & 90 \\
NQN & 246 & 2 & 2 + 0 & 233 & 15 & 2 + 0 \\ \hline

\end{tabular}
    \caption{Number of outcomes with different termination messages.}
    \label{tab:failures}
\end{table}

As can be seen from Table \ref{tab:failures}, failures for \algoname are more often due to budget exhaustion rather than another type of failure.  In total, there were 10 instances in which numerical issues led to a bad search direction.  A line search error was observed only once.
The cause for budget exhaustion in \algoname is, in part, due to the tight tolerance of $\epsilon_{\textup{abs}}$; close to a solution, the bracketing procedure takes many iterations in order to find points providing sufficient function decrease. 
Most of the large number of failures for L-BFGS-B occur due to a breakdown in the line search. This is to be expected since L-BFGS-B employs a strong Wolfe line search which is difficult to be satisfied with a nonsmooth objective.  When the weak Wolfe line search is used in L-BFGS-B-NS instead, the number of line search failures is reduced significantly.  Nevertheless, the overall number of successfully solved problems increases only marginally.
LMBM-B is the least robust method, with a noticeable increase in the failure rate as the problem size grows.





\section*{Acknowledgements}
The first author was partially supported by Office of Naval Research grant N00014-14-1-0313. The second author was partially supported by National Science Foundation grant DMS-1522747. The authors are grateful to Jorge Nocedal and Michael Overton for their insightful comments.

\bibliographystyle{plain}
\bibliography{references}

\newpage
\end{document}